\newtheorem{theorem}{Theorem}[section]
\newtheorem{lemma}[theorem]{Lemma}
\theoremstyle{definition}
\newtheorem{definition}[theorem]{Definition}
\newtheorem{example}[theorem]{Example}
\newtheorem{proposition}[theorem]{Proposition}
\newtheorem{corollary}[theorem]{Corollary}
\newtheorem{remark}[theorem]{Remark}
\theoremstyle{remark}
\newcommand{\be}{\begin{equation}}
\newcommand{\ee}{\end{equation}}
\numberwithin{equation}{section}
\begin{document}

\title{$-1$-Phenomena for the pluri $\chi_y$-genus and elliptic genus}

\author{Ping Li}
\address{Department of Mathematics, Tongji University, Shanghai 200092, China}
\email{pingli@tongji.edu.cn,\qquad pinglimath@gmail.com}
\thanks{The author was partially supported by the National Natural Science Foundation of China (Grant No.
11101308) and the Fundamental Research Funds for the Central
Universities.}


\subjclass[2010]{58J20, 58J26, 11F11, 11F50.}


\keywords{$\chi_y$-genus, pluri-genus, elliptic genus, elliptic
operator, characteristic number, Jacobi form, modular form.}

\begin{abstract}

Several independent articles have observed that the Hirzebruch
$\chi_y$-genus has an important feature, which the author calls
$-1$-phenomenon and tells us that the coefficients of the Taylor
expansion of the $\chi_y$-genus at $y=-1$ have explicit expressions.
Hirzebruch's original $\chi_y$-genus can be extended towards two
directions: the pluri-case and the case of elliptic genus. This
paper contains two parts in which  we investigate the $-1$-phenomena
in these two generalized cases respectively and show that in each
case there exists a $-1$-phenomenon in a suitable sense. Our main
results in the first part have an application, which states that all
characteristic numbers (Chern numbers and Pontrjagin numbers) on
manifolds can be expressed, in a very explicit way, in terms of some
rationally linear combination of indices of some elliptic operators.
This gives an analytic interpretation of characteristic numbers and
affirmatively answers a question posed by the author several years
ago. The second part contains our attempt to generalize this
$-1$-phenomenon to elliptic genus, a modern version of the
$\chi_y$-genus. We first extend the elliptic genus of an
almost-complex manifold to a twisted version where an extra complex
vector bundle is involved, and show that it is a weak Jacobi form
under some assumptions. A suitable manipulation on the theory of
Jacobi form will produce new modular forms from this weak Jacobi
form and thus much arithmetic information related to the underlying
manifold can be obtained, in which the $-1$-phenomenon of the
original $\chi_y$-genus is hidden.
\end{abstract}

\maketitle

\tableofcontents

\section{Introduction}

\subsection{The Hirzebruch $\chi_y$-genus and its
$-1$-phenomenon}\label{section1.2}

 In his highly influential book
\cite{Hi}, Hirzebruch defined a polynomial with integral
coefficients $\chi_y(M)$ for projective manifolds $M$, which encodes
the information of indices of Dolbeault complexes and is now called
the \emph{Hirzebruch $\chi_y$-genus}. After the discovery of the
general index theorem due to Aityah and Singer, we know that
$\chi_y(\cdot)$ can be defined on compact almost-complex manifolds
and computed in terms of Chern numbers as follows.

Suppose $(M^{2d},J)$ is a compact connected almost-complex manifold
with an almost-complex structure $J$. The choice of an almost
Hermitian metric on $M$ enables us to define the Hodge star operator
$\ast$ and the formal adjoint
$\bar{\partial}^{\ast}=-\ast\bar{\partial}~\ast$ of the
$\bar{\partial}$-operator. For each pair $0\leq p,q\leq d$, we
denote by
$$\Omega^{p,q}(M):=\Gamma(\Lambda^{p}T^{\ast}M\otimes\Lambda^{q}
\overline{T^{\ast}M})$$
 the complex vector space which consists of
smooth complex-valued $(p,q)$-forms. Here $T^{\ast}M$ is the dual of
holomorphic tangent bundle $TM$ in the sense of $J$. Then for each
$0\leq p\leq d,$ we have the following Dolbeault-type elliptic
differential operator \be\bigoplus_{q~\textrm{even}}\Omega^{p,q}(M)
\xrightarrow{(\bar{\partial}+ \bar{\partial}^{\ast})|_p}\bigoplus_{q
~\textrm{odd}}\Omega^{p,q}(M),\nonumber\ee whose index is denoted by
$\chi^{p}(M)$ in the notation of Hirzebruch \cite{Hi}. Then the
Hirzebruch $\chi_{y}$-genus of $M$ is nothing but the generating
function of these indices $\chi^p(M)$ $(0\leq p\leq d):$
$$\chi_{y}(M):=\sum_{p=0}^{d}\chi^{p}(M)\cdot y^{p}.$$
If we denote by $x_ 1,\ldots,x_d$ the formal Chern roots of $TM$,
i.e., the $i$-th elementary symmetric polynomial of $x_1,\ldots,x_d$
represents $c_i$, the $i$-th Chern class of $TM$, then the general
form of the Hirzebruch-Riemann-Roch theorem (first proved by
Hirzebruch for projective manifolds \cite{Hi}, and in the general
case by Atiyah and Singer \cite{AS}) tells us that
\be\label{HRR}\chi_{y}(M)=\int_M\prod_{i=1}^d\frac{x_i(1+y
  e^{-x_i})}{1-e^{-x_i}}.\ee
Among other things, the Hirzebruch $\chi_y$-genus has an important
feature, which the author calls \emph{``$-1$-phenomenon"} and has
been noticed, implicitly or explicitly, in several independent
articles (\cite{NR}, \cite{LW}, \cite{Sa}). This $-1$-phenomenon
says that,  at $y=-1$, the coefficients of the Taylor expansion of
$\chi_y(M)$ have explicit expressions. To be more precise, if we
write \be\label{-1}\chi_y(M)=:\sum_{i=0}^d a_i(M)\cdot(y+1)^i,\ee
 then these $a_i(M)$ can
be given explicit expressions in terms of Chern numbers of
$(M^{2d},J)$ as follows. \be\label{expressions}
\begin{split}
&a_{0}(M)=c_{d},\qquad a_{1}(M)= -\frac{1}{2}dc_{d},\qquad a_{2}(M)=
\frac{1}{12}[\frac{d(3d-5)}{2}c_{d}+c_{1}c_{d-1}],\\
&a_{3}(M)=-\frac{1}{24}[\frac{d(d-2)(d-3)}{2}c_{d}+(d-2)c_{1}c_{d-1}],\qquad\cdots.
\end{split}\ee

 By definition, these $a_i(M)$ are integers. Thus immediate
consequences of their expressions are divisibility properties of
Chern numbers. The derivation of these expressions is direct. That
is, to expand the right-hand side of (\ref{HRR}) at $y=-1$ and
express the coefficients in terms of elementary symmetric
polynomials of $x_1,\ldots,x_d$. The calculations of $a_0$ and $a_1$
are quite easy. The calculation of $a_2$ appears implicitly in
\cite[p. 18]{NR} and explicitly in \cite[p. 141-143]{LW}. Narasimhan
and Ramanan used $a_2$ to give a topological restriction on some
moduli spaces of stable vector bundles on smooth projective
varieties. Libgober and Wood used $a_2$ to prove the uniqueness of
the complex structure on K\"{a}hler manifolds of certain homotopy
types. Inspired by \cite{NR}, S. Salamon applied $a_2$
\cite[Corollary 3.4]{Sa} to obtain a restriction on the Betti
numbers of hyper-K\"{a}hler manifolds \cite[Theorem 4.1]{Sa}. The
expressions of $a_3$ and $a_4$ are also included in \cite[p.
145]{Sa}. Hirzebruch used $a_1$, $a_2$ and $a_3$ to obtain a
divisibility result on the Euler characteristic of those
almost-complex manifolds whose $c_1c_{d-1}=0$ (\cite{Hi2}). In
particular, those almost-complex manifolds with $c_1=0$ satisfy this
property.

\subsection{Pluri-$\chi_y$-genus}\label{section1.3}
Some acquaintance with index theory will lead to the observation
that $\chi_y(M)$ is the index of the following Todd operator whose
index is the Todd genus
\be\label{dolbeault}{\Omega^{0,\textrm{even}}(M)
\xrightarrow{(\bar{\partial}+
\bar{\partial}^{\ast})|_0}\Omega^{0,\textrm{odd}}(M)}\ee twisted by
$\Omega_y(M)$, where
$$\Omega_y(M):=\sum_{p=0}^{d}\Lambda^p(T^{\ast}M)\cdot y^p\in
K(M)[y]$$ and $\Lambda^p(\cdot)$ \big(resp. $K(\cdot)$\big) denotes
the $p$-th exterior power (resp. K-group). Therefore $\chi_y(M)$ can
be rewritten as follows:
$$\chi_y(M)=\text{Ind}\big((\bar{\partial}+
\bar{\partial}^{\ast})|_0\otimes\Omega_y(M)\big)=:\chi\big(M,\Omega_y(M)\big).$$

Here for simplicity we denote by the standard notation
$\chi\big(M,(\cdot)\big)$ the index of the Todd operator
(\ref{dolbeault}) twisted by an element $(\cdot)\in K(M)$.

 We can also consider, for arbitrarily fixed positive integer
$g$, the \emph{pluri $\chi_y$-genus} $\chi_{\underline{y}}(M)$ by
using sufficiently many forms of the type \be
\begin{split}\Omega_{\underline{y}}(M):&= \sum_{0\leq
p_1,\ldots,p_g\leq d}\Lambda^{p_{1}}(T^{\ast}M) \otimes \dots
\otimes \Lambda^{p_{g}}(T^{\ast}M)\cdot y_{1}^{p_{1}}\cdots
y_{g}^{p_{g}}\\
&=\Omega_{y_1}(M)\otimes\cdots\otimes\Omega_{y_g}(M)\in
K(M)[y_1,\ldots,y_g]
\end{split}\nonumber \ee
to twist $(\bar{\partial}+ \bar{\partial}^{\ast})|_0$, i.e.,
$$\chi_{\underline{y}}(M):=\text{Ind}\big((\bar{\partial}+
\bar{\partial}^{\ast})|_0
\otimes\Omega_{\underline{y}}(M)\big)=\chi\big(M,\Omega_{\underline{y}}(M)\big),$$
which specializes to the Hirzebruch's original $\chi_y$-genus when
$g=1$.

Inspired by the above-mentioned $-1$-phenomenon of the
$\chi_y$-genus, we may ask what the coefficients look like if we
expand $\chi_{\underline{y}}(M)$ at $y_1=\cdots=y_g=-1$. Our first
main observation in this article is that the coefficients of
$(y+1)^{p_1}\cdots(y+1)^{p_g}$ in $\chi_{\underline{y}}(M)$ can be
divided into three parts, which is our main result in Section
\ref{section3} (Theorem \ref{mainresult3}). Moreover, we can do a
similar manipulation for signature operator on closed smooth
oriented manifolds and their coefficients also have a similar
feature (Theorem \ref{mainresult4}). A direct corollary of these two
theorems is that any Chern number of $(M^{2d},J)$ (resp. any
Pontrjagin number of a closed smooth oriented manifold) can be
written as a rationally linear combination of indices of some
elliptic operators explicitly, which provides an analytic
interpretation of characteristic numbers and answers a question of
the author proposed in \cite[Question 1.1]{Li} affirmatively.

\subsection{Elliptic genus}\label{section1.4}
Elliptic genera of oriented differentiable manifolds and
almost-complex manifolds were first constructed by Ochanine,
Landweber-Stong and Hirzebruch in a topological way and Witten gave
it a geometric interpretation, which can be viewed as loop spaces'
analogues to the Hirzebruch $L$-genus and $\chi_y$-genus (cf.
\cite{La} and the references therein). The most remarkable property
of elliptic genera is their rigidity for spin manifolds and
almost-complex Calabi-Yau manifolds \big(in the very weak sense that
$c_1$ vanishes up to torsion, i.e., $c_1=0\in
H^2(M,\mathbb{R})$\big), which was conjectured by Witten and
generalizes the famous rigidity property of the original $L$-genus,
$\hat{A}$-genus (\cite{AH}) and $\chi_y$-genus (\cite{Lu}). The
first rigorous proof was presented by Bott and Taubes (\cite{Ta},
\cite{BT}). A quite simple, unified and enlightening proof was
discovered by Liu (\cite{Liu1}), in which modular invariance of the
four classical Jacobi-theta functions and their various
transformation laws play key roles. Later on, this modular
invariance property, its variously remarkable extensions and
relation with vertex operator algebra were established by Liu and
his coauthors from various aspects (\cite{Liu}, \cite{Liu2},
\cite{LiuMa}, \cite{LiuMaZhang1}, \cite{LiuMaZhang2}, \cite{DLM},
\cite{CHZ}, \cite{CH}, \cite{HZ}, \cite{HLZ}, \cite{HL} etc.).

 What we are concerned
with in this paper is the elliptic genus of almost-complex
manifolds. The elliptic genus of a compact, almost-complex manifold
$(M^{2d},J)$, which we denote by $\textrm{Ell}(M,\tau,z)$, is
defined as a function of two variables
$(\tau,z)\in\mathbb{H}\times\mathbb{C}$, where $\mathbb{H}$ is the
upper half plane. To be more precise, $\textrm{Ell}(M,\tau,z)$ is
defined to be the index of the Todd operator (\ref{dolbeault})
twisted by
$$y^{-\frac{d}{2}}\otimes_{n\geq 1}(\Lambda_{-yq^{n-1}}T^{\ast}
\otimes\Lambda_{-y^{-1}q^n}T\otimes\textrm{S}_{q^n}T^{\ast}
\otimes\textrm{S}_{q^n}T)=:\textrm{E}_{q,y},$$ i.e.,
$\textrm{Ell}(M,\tau,z):=\chi(M,\textrm{E}_{q,y})$, where
$q=e^{2\pi\sqrt{-1}\tau}$, $y=e^{2\pi\sqrt{-1}z}$ and $T$ (resp.
$T^{\ast}$) is the holomorphic (resp. dual of holomorphic) tangent
bundle of $M$ in the sense of $J$. Here
$$\Lambda_t(W):=\bigoplus_{i\geq 0}\Lambda^i(W)\qquad \textrm{and}
\qquad S_t(W):=\bigoplus_{i\geq 0}S^i(W)$$ for any complex vector
bundle $W$ denote the generating series of the exterior and
symmetric powers of $W$ respectively.

According to the Atiyah-Singer index theorem we have
\be\begin{split}
\textrm{Ell}(M,\tau,z)&=\int_{M}\textrm{td}(M)\cdot\textrm{ch}
(\textrm{E}_{q,y})\\
&=y^{-\frac{d}{2}}\chi_{-y}(M)+q\cdot\big[y^{-\frac{d}{2}}\chi_{-y}
\big(M,T^{\ast}(1-y)+T(1-y^{-1})\big)\big]+q^2\cdot(\cdots),\end{split}\nonumber\ee
where $$\textrm{td}(M):=\prod_{i=1}^d\frac{x_i}{1-e^{-x_i}}$$ is the
Todd class of $M$ and $\textrm{ch}(\cdot)$ is the Chern character.

Thus elliptic genus $\textrm{Ell}(M,\tau,z)$ can be viewed as a
generalization of the Hirzebruch $\chi_y$-genus in the sense that
the $q^0$-term of the Fourier expansion of $\textrm{Ell}(M,\tau,z)$
is essentially $\chi_y(M)$. If $(M^{2d},J)$ is Calabi-Yau, the
coefficients of $q$-expansion of $\textrm{Ell}(M,\tau,z)$ are rigid
for arbitrary $y$ (\cite[Theorem B]{Liu1}). Moreover, in this case,
$\textrm{Ell}(M,\tau,z)$ itself is a weak Jacobi form of weight $0$
and index $\frac{d}{2}$ (\cite[Theorem 2.2]{BL}, \cite[Proposition
1.2]{Gr1}).

As we have mentioned above, elliptic genus $\textrm{Ell}(M,\tau,z)$
can be viewed as a generalization of $\chi_y(M)$ and also has a
rigidity property when $M$ is Calabi-Yau. So we may ask, in the case
of Calabi-Yau, whether $\textrm{Ell}(M,\tau,z)$ has some kind of
arithmetic phenomenon which extends the original $-1$-phenomenon of
$\chi_y(M)$. Note that, strictly speaking, $\textrm{Ell}(M,\tau,z)$
is a generalization of $\chi_{-y}(M)$ rather than $\chi_y(M)$ as the
$q^0$-term of $\textrm{Ell}(M,\tau,z)$ is
$y^{-\frac{d}{2}}\chi_{-y}(M)$. So if there exists some kind of
phenomenon which extends the original $-1$-phenomenon of
$\chi_y(M)$, the parameter $y=e^{2\pi\sqrt{-1}z}$ should correspond
to $1$ rather than $-1$. Thus the variable $z$ should correspond to
$0$. Indeed, there does exist such a kind of generalization, which
depends on some arithmetic properties of Jacobi form and has been
implicitly used by Gritsenko in \cite{Gr1}. Our aim in Section
\ref{section3} is two-fold. On the one hand, given a compact
almost-complex manifold $(M^{2d},J)$ and a rank $l$ complex vector
bundle $W$ over it, we construct a generalized elliptic genus
$\textrm{Ell}(M,W,\tau,z)$, which is defined to be the index of the
Todd operator (\ref{dolbeault}) twisted by
$$[\prod_{i=1}^{\infty}(1-q^i)]^{2(d-l)}\cdot y^{-\frac{l}{2}}\otimes_{n\geq 1}(\Lambda_{-yq^{n-1}}W^{\ast}
\otimes\Lambda_{-y^{-1}q^n}W\otimes\textrm{S}_{q^n}T^{\ast}
\otimes\textrm{S}_{q^n}T),$$ and show that it is a weak Jacobi form
of weight $d-l$ and index $\frac{l}{2}$ if the first Pontrjagin
classes $p_1(M)=p_1(W)$ and the first Chern class $c_1(W)=0$ in
$H^{\ast}(M,\mathbb{R})$. On the other hand, we highlight a
well-known manipulation in Jacobi form to obtain modular forms from
$\textrm{Ell}(M,W,\tau,z)$, whose arithmetic information will in
turn give geometric results on $M$ and $W$. Some examples are given
to illustrate this observation.

\section*{Acknowledgements}
The first part of this paper was inspired by my fruitful discussions
with George Thompson via emails in 2010 and 2011. The second part of
this paper was suggested to me by Fei Han when I visited him at the
National University of Singapore in 2012. This paper was initiated
when I was holding a JSPS Postdoctoral Fellowship for Foreign
Researchers at Waseda University in Japan with the help of Martin
Guest. To all of them I would like to express my sincere thanks.

\section{$-1$-phenomenon of the pluri-$\chi_y$-genus}\label{section2}
\subsection{Statements of the main results related to the pluri-$\chi_y$-genus}
Let $(M^{2n},J)$ (resp. $X^{2n}$) be a compact almost-complex
manifold of complex dimension $n$ (resp. smooth, closed oriented
manifold of real dimension $2n$). As before we use $(\bar{\partial}+
\bar{\partial}^{\ast})|_0$ to denote the Todd operator on
$(M^{2n},J)$ whose index is the Todd genus of $M$. We denote by $D$
the signature operator on $X$, whose index is the signature of
$X^{2n}$ (\cite[\S 6]{AS}). By definition $\text{Ind}(D)$ is zero
unless $n$ is even.

Let $W$ be a complex vector bundle over $M$ (resp. $X$). By means of
a connection on $W$, the elliptic operator $(\bar{\partial}+
\bar{\partial}^{\ast})|_0$ and $D$ can be extended to a new elliptic
operator $\big((\bar{\partial}+
\bar{\partial}^{\ast})|_0\big)\otimes W$ and $D\otimes W$, whose
indices via the Atiyah-Singer index theorem are
\be\begin{split}\chi(M,W)=\textrm{Ind}\big(\big((\bar{\partial}+
\bar{\partial}^{\ast})|_0\big)\otimes
W\big)&=\int_M[\text{td}(M)\cdot\textrm{ch}(W)]\\
&=\int_M[\prod_{i=1}^{n}
\frac{x_i}{1-e^{-x_i}}\cdot\textrm{ch}(W)]\end{split}\nonumber\ee
and
$$\text{Ind}\big((D\otimes W)\big)=\int_X
[(\prod_{i=1}^{n}\frac{x_i}{\tanh{\frac{x_i}{2}}})\cdot\text{ch}(W)]$$
respectively. Here we use the $i$-th elementary symmetry polynomial
of $x_1,\ldots,x_n$ (resp. $x_1^2,\ldots,x_n^2$) to denote the
$i$-th Chern class (resp. Pontrjagin class) of $(M^{2n},J)$ (resp.
$X^{2n}$).
\begin{definition}
For arbitrarily fixed positive integer $g$, we define
 \be\begin{split}\Omega_{\underline{y}}(M):&= \sum_{0\leq
p_1,\ldots,p_g\leq n}\Lambda^{p_{1}}(T^{\ast}M) \otimes \dots
\otimes \Lambda^{p_{g}}(T^{\ast}M)\cdot y_{1}^{p_{1}}\cdots
y_{g}^{p_{g}}\\
&=\Omega_{y_1}(M)\otimes\cdots\otimes\Omega_{y_g}(M)\in
K(M)[y_1,\ldots,y_g],
\end{split}\nonumber \ee

\be\begin{split}\Omega_{\underline{y}}^\mathbb{R}(X):&= \sum_{0\leq
p_1,\ldots,p_g\leq 2n}\Lambda^{p_{1}}(T^{\ast}_{\mathbb{C}}X)
\otimes \dots \otimes \Lambda^{p_{g}}(T^{\ast}_{\mathbb{C}}X)\cdot
y_{1}^{p_{1}}\cdots
y_{g}^{p_{g}}\\
&=\Omega_{y_1}^{\mathbb{R}}(X)\otimes\cdots\otimes\Omega_{y_g}^{\mathbb{R}}(X)\in
\big(KO(X)\otimes\mathbb{C}\big)[y_1,\ldots,y_g],
\end{split}
\nonumber \ee where
$$\Omega_{y}^{\mathbb{R}}(X):=\sum_{p=0}^{2n}\Lambda^{p}(T^{\ast}_{\mathbb{C}}X)\cdot
y^p$$ and $T^{\ast}_{\mathbb{C}}X$ is the dual of the complexified
tangent bundle of $X$, and
\be\begin{split}\chi_{\underline{y}}(M):&=\sum_{0\leq
p_1,\ldots,p_g\leq n}\textrm{Ind}\big[(\bar{\partial}+
\bar{\partial}^{\ast})|_0\otimes\big(\Lambda^{p_{1}}(T^{\ast}M)
\otimes
\dots \otimes \Lambda^{p_{g}}(T^{\ast}M)\big)\big]\cdot y_{1}^{p_{1}} \cdots y_{g}^{p_{g}}\\
&=\int_M\big[\prod_{i=1}^{n}\frac{x_i}{1-e^{-x_i}}\cdot\textrm{ch}(\Omega_{\underline{y}}(M))\big]
\end{split}\nonumber\ee

\be\begin{split}D_{\underline{y}}(X):&= \sum_{0\leq
p_1,\ldots,p_g\leq
2n}\textrm{Ind}\big[D\otimes\big(\Lambda^{p_{1}}(T^{\ast}_{\mathbb{C}}X)
\otimes \dots \otimes
\Lambda^{p_{g}}(T^{\ast}_{\mathbb{C}}X)\big)\big]\cdot y_{1}^{p_{1}}
\cdots y_{g}^{p_{g}}\\
&=\int_X\big[(\prod_{i=1}^{n}\frac{x_i}
{\tanh{\frac{x_i}{2}}})\cdot\textrm{ch}(\Omega_{\underline{y}}^\mathbb{R}(X))\big].
\end{split}\nonumber\ee
\end{definition}

Our main result in this section is
\begin{theorem}\label{mainresult3}
{\rm{The coefficient of
$(1+y_{1})^{n-q_{1}}\cdots(1+y_{g})^{n-q_{g}}$ in
$\chi_{\underline{y}}(M)$ is equal to
\begin{eqnarray}
\left\{ \begin{array}{ll} 0, & \textrm{if
$\sum_{i=1}^{g}q_{i} > n$},\\
\int_M\prod_{i=1}^gc_{q_i}(M), & \textrm{if $\sum_{i=1}^{g}q_{i}=n$},\\
\text{a rationally linear combination of Chern numbers of $M$}, &
\textrm{if $\sum_{i=1}^{g}q_{i}<n$}.
\end{array} \right.
\nonumber\end{eqnarray}}}
\end{theorem}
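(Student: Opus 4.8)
The plan is to start from the Hirzebruch--Riemann--Roch formula (\ref{HRR}) applied to the twisting class $\Omega_{\underline{y}}(M)$. Since the Chern roots of $T^{\ast}M$ are $-x_1,\ldots,-x_n$, the Chern character of $\Omega_{y_j}(M)=\sum_p\Lambda^p(T^{\ast}M)y_j^p$ is $\prod_{i=1}^n(1+y_je^{-x_i})$, so that
\[
\chi_{\underline{y}}(M)=\int_M\prod_{i=1}^n\Big[\frac{x_i}{1-e^{-x_i}}\prod_{j=1}^g(1+y_je^{-x_i})\Big].
\]
First I would substitute $u_j:=1+y_j$, which turns each factor into $1+y_je^{-x_i}=(1-e^{-x_i})+u_je^{-x_i}$; the quantity sought is the coefficient of $\prod_j u_j^{\,n-q_j}$ in this integrand, of which one then extracts the degree-$n$ part and integrates over $M$.

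Next, expanding the product, the coefficient of $\prod_j u_j^{\,n-q_j}$ is obtained by choosing, for each $j$, a set $S_j\subseteq\{1,\ldots,n\}$ of size $n-q_j$ (the indices $i$ that contribute the summand $u_je^{-x_i}$), the complementary factors contributing $1-e^{-x_i}$. Writing $a_i:=\#\{j:i\in S_j\}$, the contribution attached to index $i$ is $\frac{x_i}{1-e^{-x_i}}\,e^{-a_ix_i}(1-e^{-x_i})^{g-a_i}$. The key step is a degree count: since $\frac{x_i}{1-e^{-x_i}}$ and $e^{-a_ix_i}$ begin in degree $0$ while $1-e^{-x_i}=x_i-\tfrac12 x_i^2+\cdots$ begins in degree $1$, this contribution has lowest $x_i$-degree $g-a_i$. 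Summing over $i$ gives total lowest degree $\sum_i(g-a_i)=ng-\sum_j|S_j|=\sum_j q_j$.

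This settles the first case at once: if $\sum_j q_j>n$ the relevant coefficient has no degree-$n$ part, so the integral vanishes. In the borderline case $\sum_j q_j=n$, the total lowest degree is exactly $n$, so only the lowest term of each factor can contribute; as $1-e^{-x_i}$ has leading coefficient $1$ and the other two series have constant term $1$, the degree-$n$ part equals $\sum_{S_1,\ldots,S_g}\prod_i x_i^{\,g-a_i}$. Passing to the complements $T_j:=\{1,\ldots,n\}\setminus S_j$, which have size $q_j$ and satisfy $g-a_i=\#\{j:i\in T_j\}$, I would rewrite $\prod_i x_i^{\,g-a_i}=\prod_j\prod_{i\in T_j}x_i$; since the $T_j$ range independently, this factors as $\prod_j e_{q_j}(x_1,\ldots,x_n)=\prod_j c_{q_j}(M)$, yielding $\int_M\prod_j c_{q_j}(M)$. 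The third case is then immediate, since the coefficient is visibly a symmetric polynomial in the $x_i$ with rational coefficients and hence integrates to a rational combination of Chern numbers.

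I expect the main obstacle to be essentially bookkeeping rather than conceptual: keeping the subset-choice combinatorics consistent, checking that the leading coefficients multiply to exactly $1$ so that no spurious rational factor intrudes in the equality case, and verifying the factorization into elementary symmetric functions. Everything rests on the two simple facts that $1-e^{-x_i}$ vanishes to order $1$ with leading coefficient $1$ and that the Todd series $\frac{x_i}{1-e^{-x_i}}$ has constant term $1$; once the substitution $u_j=1+y_j$ is in place, the degree count does the rest.
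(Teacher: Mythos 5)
Your proof is correct and takes essentially the same route as the paper: the paper's Lemma 2.5 performs exactly your substitution $1+y_je^{-x_i}=(1-e^{-x_i})+(1+y_j)e^{-x_i}$, identifies the coefficient of $(1+y)^{n-q}$ as $e^{-c_1}\cdot c_q(e^{x_1}-1,\ldots,e^{x_n}-1)=c_q(x_1,\ldots,x_n)+\text{higher degree terms}$, and then concludes by the same degree count against the Todd factor. Your subset bookkeeping with the $S_j$ and complements $T_j$ is just the expanded form of that elementary-symmetric-polynomial identity, so the two arguments coincide in substance.
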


We have a similar result for smooth manifolds.

\begin{theorem}\label{mainresult4}
 {\rm{ If $n$ is even, the coefficient of
$$(1+y_{1})^{2(n-q_{1})}\cdots(1+y_{g})^{2(n-q_{g})}$$
 in
$D_{\underline{y}}(X)$ is equal to
\begin{eqnarray}
\left\{ \begin{array}{ll} 0, & \textrm{if
$\sum_{i=1}^{g}q_{i} > \frac{n}{2}$},\\
(-1)^{\frac{n}{2}}\cdot 2^{n}\cdot\int_X\prod_{i=1}^g p_{q_i}(X),
& \textrm{if $\sum_{i=1}^{g}q_{i}=\frac{n}{2}$},\\
\text{a rationally linear combination of Pontrjagin numbers of $X$},
& \textrm{if $\sum_{i=1}^{g}q_{i}<\frac{n}{2}$},
\end{array} \right.
\nonumber\end{eqnarray} where $p_i(X)$ is the $i$-th Pontrjagin
class of $X$.
 }}
\end{theorem}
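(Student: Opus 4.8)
The plan is to run, for the signature operator, the same substitution-and-degree-count argument that would prove Theorem \ref{mainresult3}. Writing the Chern roots of the complexified tangent bundle $T_{\mathbb C}X$ as $\pm x_1,\dots,\pm x_n$, the Chern character of $\Omega_y^{\mathbb R}(X)=\Lambda_y(T^\ast_{\mathbb C}X)$ is $\prod_{i=1}^n(1+ye^{x_i})(1+ye^{-x_i})$, so the defining integral for $D_{\underline y}(X)$ becomes
\[
D_{\underline y}(X)=\int_X\prod_{i=1}^n\Big[\frac{x_i}{\tanh(x_i/2)}\prod_{j=1}^g(1+y_je^{x_i})(1+y_je^{-x_i})\Big].
\]
First I would substitute $y_j=-1+u_j$ (so $u_j=1+y_j$) and record the elementary identity
\[
(1+y_je^{x_i})(1+y_je^{-x_i})=u_j^{\,2}-4\sinh^2\tfrac{x_i}{2}+4\sinh^2\tfrac{x_i}{2}\,u_j,
\]
whose three monomials in $u_j$ have $x$-orders $0,2,2$ respectively (I measure the order of a polynomial in the $x_i$ by assigning $x_i$ degree $1$, so the fundamental class sits in degree $n$). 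Since each factor of the integrand is an even function of $x_i$, every coefficient will automatically be a symmetric polynomial in $x_1^2,\dots,x_n^2$.

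Expanding the product over $j$ amounts to choosing, for each $i$, a subset $T_i\subseteq\{1,\dots,g\}$ of indices that contribute the $4\sinh^2(x_i/2)$-terms; the resulting summand of the $i$-th factor has $x$-order exactly $2\lvert T_i\rvert$ with leading coefficient $2$, coming from $x_i/\tanh(x_i/2)\to 2$ together with $4\sinh^2(x_i/2)\sim x_i^2$. Setting $m_j:=\#\{i:j\in T_i\}$, the total $x$-order of any surviving monomial is at least $2\sum_j m_j$. Matching the power of $u_j$ to the prescribed value $2(n-q_j)$ forces $q_j\le m_j\le 2q_j$ (the factors with $j\notin T_i$ already supply $u_j^{2(n-m_j)}$, and the factors with $j\in T_i$ supply $u_j$ an additional $k_j=2(m_j-q_j)$ times), so the minimal contributing $x$-order is $2\sum_j q_j$. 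This at once gives the vanishing when $\sum_j q_j>\tfrac n2$, since then the integrand lies in degrees exceeding $2n$; and when $\sum_j q_j<\tfrac n2$ it exhibits the coefficient as the integral of a symmetric polynomial in the $x_i^2$, i.e.\ a rational combination of Pontrjagin numbers.

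The substantive case is the boundary $\sum_j q_j=\tfrac n2$, where the order bound is saturated. This forces $\sum_j m_j=\tfrac n2$ and, together with $m_j\ge q_j$, forces $m_j=q_j$ for every $j$; it also forces each factor to contribute its leading term and forces $k_j=0$, meaning that in each of the $q_j$ factors with $j\in T_i$ one must select the constant $-1$ (rather than $u_j$) from $(u_j-1)$. Collecting the leading coefficient $2$ of each of the $n$ factors and the sign produced by these selections, every admissible configuration contributes
\[
2^{\,n}\,(-1)^{\sum_i\lvert T_i\rvert}\prod_{i=1}^n x_i^{\,2\lvert T_i\rvert}
=2^{\,n}\,(-1)^{n/2}\!\!\prod_{(i,j):\,j\in T_i}\!\! x_i^2
\]
to the coefficient of $\prod_j u_j^{2(n-q_j)}$, using $\sum_i\lvert T_i\rvert=\sum_j m_j=\tfrac n2$. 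Because the columns of the membership matrix $([\,j\in T_i\,])_{i,j}$---each constrained to carry exactly $q_j$ ones---may be chosen independently, summing over admissible configurations factorizes as $\prod_{j=1}^g e_{q_j}(x_1^2,\dots,x_n^2)=\prod_{j=1}^g p_{q_j}(X)$, yielding $(-1)^{n/2}2^{\,n}\int_X\prod_j p_{q_j}(X)$, exactly the stated value.

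The step I expect to be most delicate, and would therefore write out with care, is this boundary-case bookkeeping: one must simultaneously track (i) the $x$-order $2\lvert T_i\rvert$ produced by each factor, (ii) the total power of each $u_j$, split into the fixed $u_j^{2}$ coming from factors with $j\notin T_i$ plus the variable contribution of the $(u_j-1)$'s, and (iii) the sign accrued from selecting $-1$. The two facts that turn the surviving combinatorics into the asserted characteristic numbers are the identity $e_{q_j}(x_1^2,\dots,x_n^2)=p_{q_j}(X)$ and the evenness of every factor in $x_i$ (which guarantees that only genuine Pontrjagin classes, never mixed terms, can appear); granting these, the argument is structurally identical to the proof of Theorem \ref{mainresult3}.
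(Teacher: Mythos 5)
Your proof is correct and follows essentially the same route as the paper: expand everything in powers of $u_j=1+y_j$, bound the $x$-degree of each contributing term from below, and let the degree count against the fundamental class produce the three cases, with the saturated case yielding $\prod_j e_{q_j}(x_1^2,\dots,x_n^2)=\prod_j p_{q_j}(X)$ with leading factor $2^n$ and sign $(-1)^{n/2}$. Indeed your key identity $(1+y_je^{x_i})(1+y_je^{-x_i})=u_j^2+4\sinh^2\tfrac{x_i}{2}\,(u_j-1)$ is precisely the paper's factorization $\left[(e^{x_i}-1)+(1+y_j)\right]\left[(e^{-x_i}-1)+(1+y_j)\right]$ multiplied out, and your membership-matrix bookkeeping replaces the paper's Lemma on the coefficient of $(1+y)^{2(n-q)}$ being $(-1)^q c_q(x_1^2,\dots,x_n^2)$ plus higher-degree terms.
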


Clearly a direct corollary of this theorem is the following result,
which gives an affirmative answer to a question proposed by the
author in \cite[Question 1.1]{Li}.

\begin{corollary}
Any Chern number (resp. Pontrjagin number) on a compact
almost-complex manifold (resp. compact smooth manifold) can be
expressed, in a very explicit way, in terms of the indices of some
elliptic differential operators over this manifold.
\end{corollary}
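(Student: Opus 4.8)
The plan is to derive the corollary directly from Theorems \ref{mainresult3} and \ref{mainresult4}: every characteristic number already appears there as the \emph{middle} (top-degree) case of the trichotomy, and the corresponding $(1+y_j)$-coefficient is, by the very definition of $\chi_{\underline{y}}(M)$ and $D_{\underline{y}}(X)$, an integral linear combination of indices of elliptic operators over the manifold.

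First I would note that an arbitrary Chern number of $(M^{2n},J)$ is $\int_M\prod_k c_k(M)^{a_k}$ for nonnegative integers $a_k$ with $\sum_k k\,a_k=n$. Setting $g:=\sum_k a_k$ and letting $(q_1,\dots,q_g)$ be the partition of $n$ having $a_k$ parts equal to $k$, this monomial equals $\prod_{j=1}^g c_{q_j}(M)$ with $\sum_j q_j=n$; hence, by the second line of Theorem \ref{mainresult3}, it is exactly the coefficient of $(1+y_1)^{n-q_1}\cdots(1+y_g)^{n-q_g}$ in $\chi_{\underline{y}}(M)$. Thus every Chern number of $M$ is realised as one of these coefficients for a suitable choice of $g$ and of the $q_j$.

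To make the expression explicit, I would read this coefficient off the definition of $\chi_{\underline{y}}(M)$. Writing $I(p_1,\dots,p_g):=\textrm{Ind}\big[(\bar{\partial}+\bar{\partial}^{\ast})|_0\otimes(\Lambda^{p_1}(T^{\ast}M)\otimes\cdots\otimes\Lambda^{p_g}(T^{\ast}M))\big]$ for the integer indices that are the coefficients of $y_1^{p_1}\cdots y_g^{p_g}$, and substituting $y_j^{p_j}=\sum_s\binom{p_j}{s}(-1)^{p_j-s}(1+y_j)^s$, the coefficient of $\prod_j(1+y_j)^{n-q_j}$ becomes the finite combination $\sum_{p_1,\dots,p_g}\big[\prod_j\binom{p_j}{n-q_j}(-1)^{p_j-n+q_j}\big]\,I(p_1,\dots,p_g)$. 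Together with the previous step this yields $\int_M\prod_k c_k^{a_k}=\sum_{p_1,\dots,p_g}\big[\prod_j\binom{p_j}{n-q_j}(-1)^{p_j-n+q_j}\big]\,I(p_1,\dots,p_g)$, an explicit integral (hence rational) linear combination of indices of the elliptic operators $(\bar{\partial}+\bar{\partial}^{\ast})|_0\otimes(\Lambda^{p_1}(T^{\ast}M)\otimes\cdots\otimes\Lambda^{p_g}(T^{\ast}M))$.

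The Pontrjagin case is identical, using Theorem \ref{mainresult4} in place of Theorem \ref{mainresult3}. For $n$ even, any Pontrjagin number of $X^{2n}$ is $\int_X\prod_k p_k(X)^{a_k}$ with $\sum_k k\,a_k=\frac{n}{2}$; taking $g=\sum_k a_k$ and the associated partition $(q_1,\dots,q_g)$ of $\frac{n}{2}$, this is $\int_X\prod_j p_{q_j}(X)$, which by the middle case of Theorem \ref{mainresult4} equals $(-1)^{n/2}2^{-n}$ times the coefficient of $\prod_j(1+y_j)^{2(n-q_j)}$ in $D_{\underline{y}}(X)$. The same binomial expansion writes that coefficient as an integral combination of the indices $\textrm{Ind}\big[D\otimes(\Lambda^{p_1}(T^{\ast}_{\mathbb{C}}X)\otimes\cdots\otimes\Lambda^{p_g}(T^{\ast}_{\mathbb{C}}X))\big]$, and multiplying by the rational number $(-1)^{n/2}2^{-n}$ produces the desired rational linear combination. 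There is essentially no analytic obstacle here, since Theorems \ref{mainresult3} and \ref{mainresult4} already carry all the content; the only points to verify are the elementary combinatorial facts that every characteristic monomial arises as such a product $\prod_j c_{q_j}$ (resp. $\prod_j p_{q_j}$) with $\sum_j q_j$ attaining its top value, and that the stated coefficient extraction is visibly an integral combination of indices.
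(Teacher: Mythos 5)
Your proposal is correct and takes essentially the same route as the paper, which states this corollary as a direct consequence of Theorems \ref{mainresult3} and \ref{mainresult4}: every Chern (resp.\ Pontrjagin) number occurs as the top-degree coefficient in the $(1+y_j)$-expansion of $\chi_{\underline{y}}(M)$ (resp.\ $D_{\underline{y}}(X)$, up to the factor $(-1)^{n/2}2^{-n}$). Your binomial-expansion step merely makes explicit the point the paper leaves implicit, namely that each such coefficient is an integral linear combination of the indices $\textrm{Ind}\big[(\bar{\partial}+\bar{\partial}^{\ast})|_0\otimes(\Lambda^{p_1}(T^{\ast}M)\otimes\cdots\otimes\Lambda^{p_g}(T^{\ast}M))\big]$ (resp.\ the twisted signature indices).
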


\subsection{Proofs of Theorems \ref{mainresult3} and \ref{mainresult4}}
By the abuse of notation we use $c_q(\cdots)$ to denote both the
$q$-th Chern class of an almost-complex manifold and the $q$-th
elementary symmetric polynomial of the variables in the bracket.

The proofs of Theorems \ref{mainresult3} and \ref{mainresult4}
depend on the following lemma.
\begin{lemma}\label{lemma2}
{\rm{If we assign each $x_i$ ($1\leq i\leq n$) the same degree, then
we have
\begin{enumerate} \item The coefficient of
$(1+y)^{n-q}$ ($0\leq q\leq n$) in $\prod_{i=1}^n(1+ye^{-x_i})$ is
$$c_q(x_1,\ldots,x_n)+\textrm{higher degree terms}.$$

\item The coefficient of
$(1+y)^{2(n-q)}$ ($0\leq q\leq n$) in
$\prod_{i=1}^n(1+ye^{-x_i})(1+ye^{x_i})$ is
$$(-1)^qc_q(x_1^2,\ldots,x_n^2)+\textrm{higher degree terms}.$$
\end{enumerate}}}
\end{lemma}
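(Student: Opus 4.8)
The plan is to pass to the variable $s := 1+y$ and expand both products as polynomials in $s$ whose coefficients are power series in $x_1,\ldots,x_n$, then isolate the lowest-degree homogeneous part of the relevant coefficient. Throughout I record for each building block its order of vanishing at $x=0$, using $1-e^{-x_i}=x_i+O(x_i^2)$ and $e^{\pm x_i}=1+O(x_i)$.

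For part (1), substituting $y=s-1$ gives $1+y e^{-x_i}=(1-e^{-x_i})+s\,e^{-x_i}$, so
$$\prod_{i=1}^n(1+y e^{-x_i})=\sum_{S\subseteq\{1,\ldots,n\}}s^{|S|}\prod_{i\in S}e^{-x_i}\prod_{i\notin S}(1-e^{-x_i}).$$
The coefficient of $(1+y)^{n-q}=s^{n-q}$ is therefore the sum over the subsets $S$ with $|S|=n-q$. In each such term $\prod_{i\in S}e^{-x_i}=1+(\textrm{higher degree})$ while $\prod_{i\notin S}(1-e^{-x_i})$ has lowest-degree part $\prod_{i\notin S}x_i$, a monomial of degree $q$. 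Summing these lowest-degree contributions over all complements $\{1,\ldots,n\}\setminus S$ of cardinality $q$ reproduces exactly $c_q(x_1,\ldots,x_n)$, and everything else is of strictly higher degree, which is the assertion.

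For part (2) I would group the two factors attached to each $x_i$ and, after the same substitution, write
$$(1+y e^{-x_i})(1+y e^{x_i})=A_i+B_i\,s+s^2,$$
where $A_i=(1-e^{-x_i})(1-e^{x_i})$ and $B_i=(1-e^{-x_i})e^{x_i}+(1-e^{x_i})e^{-x_i}$. A short computation gives $A_i=-x_i^2+O(x_i^3)$ and $B_i=x_i^2+O(x_i^3)$, so both have order of vanishing $2$, while the $s^2$-term has order $0$. Expanding $\prod_i(A_i+B_i s+s^2)$ and selecting, at each index $i$, the term $A_i$ (order $2$, power $s^0$), $B_i s$ (order $2$, power $s^1$), or $s^2$ (order $0$, power $s^2$) at $a$, $b$, $c$ indices respectively, a contribution to the coefficient of $s^{2(n-q)}$ requires $a+b+c=n$ and $b+2c=2(n-q)$, which force $b=2(q-a)$ and give total $x$-degree $2(a+b)=2(2q-a)$. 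The main point, and the only genuine bookkeeping in the proof, is that this degree is minimized uniquely at $a=q$, $b=0$, $c=n-q$, yielding minimal degree $2q$; the corresponding lowest-degree part is $\sum_{|S|=q}\prod_{i\in S}(-x_i^2)=(-1)^q c_q(x_1^2,\ldots,x_n^2)$, with all remaining contributions of strictly higher degree. The hard part is precisely this combinatorial degree estimate: one must verify that allowing any $B_i s$ factors (that is, $b>0$) strictly raises the degree, so that no lower-order term from the middle coefficients $B_i$ can interfere with the leading coefficient.
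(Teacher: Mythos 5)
Your proof is correct, and part (1) coincides with the paper's own computation: the paper's factorization $\prod_{i=1}^n(1+ye^{-x_i})=e^{-c_1}\prod_{i=1}^n[(e^{x_i}-1)+(1+y)]$, followed by reading off the coefficient of $(1+y)^{n-q}$ as $e^{-c_1}\,c_q(e^{x_1}-1,\ldots,e^{x_n}-1)$, is exactly your subset expansion in the variable $s=1+y$. In part (2), however, you take a genuinely different route. The paper does not pair the two factors at each index: it writes the product as $\prod_{i=1}^n[(e^{x_i}-1)+(1+y)][(e^{-x_i}-1)+(1+y)]$, so that the coefficient of $(1+y)^{2n-q'}$ is the single elementary symmetric polynomial $c_{q'}$ of the $2n$ quantities $e^{\pm x_i}-1$, whose lowest-degree part is $c_{q'}(x_1,\ldots,x_n,-x_1,\ldots,-x_n)$; the standard identity $\prod_{i}(1+x_it)(1-x_it)=\prod_i(1-x_i^2t^2)$ then shows this part is $0$ for $q'$ odd and equals $(-1)^{q'/2}c_{q'/2}(x_1^2,\ldots,x_n^2)$ for $q'$ even. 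You instead group the factors into the quadratic $A_i+B_is+s^2$ and locate the minimal-degree contributions through the constraint count on $(a,b,c)$, which you carry out correctly, including the essential point that any choice with $b>0$ forces degree $2(2q-a)>2q$. Both arguments are sound; the paper's version replaces your combinatorial minimization by one symmetric-function identity, and it also shows in passing that the coefficients of the odd powers $(1+y)^{2n-q'}$ ($q'$ odd) begin in degree strictly greater than $q'$ --- information your expansion, being restricted to the even powers of $(1+y)$, never sees (nor does the lemma require it). A small simplification available in your route: one has $A_i=-B_i=2-e^{x_i}-e^{-x_i}$ exactly, so your two separate Taylor computations collapse to one.
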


\begin{proof}
$$\prod_{i=1}^n(1+ye^{-x_i})=\prod_{i=1}^n[(1-e^{-x_i})+
(1+y)e^{-x_i}]=e^{-c_1}\prod_{i=1}^n[(e^{x_i}-1)+(1+y)].$$

Thus the coefficient of $(1+y)^{n-q}$ in
$\prod_{i=1}^n(1+ye^{-x_i})$ is
$$e^{-c_1}\cdot
c_q(e^{x_1}-1,\ldots,e^{x_n}-1)=c_q(x_{1},\ldots,x_{n})+\textrm{higher
degree terms}.$$

Similarly,
$$\prod_{i=1}^n(1+ye^{-x_i})(1+ye^{x_i})=
\prod_{i=1}^n[(e^{x_i}-1)+(1+y)][(e^{-x_i}-1)+(1+y)]$$ and the
coefficient of $(1+y)^{2n-q}$ is \be\begin{split} &
c_q(e^{x_1}-1,\ldots,e^{x_n}-1,e^{-x_1}-1,\ldots,e^{-x_n}-1)\\
=&c_q(x_1,\ldots,x_n,-x_1,\ldots,-x_n)+\textrm{higher degree terms}
\end{split}\nonumber\ee

Note that
\begin{eqnarray}
c_q(x_1,\ldots,x_n,-x_1,\ldots,-x_n) = \left\{ \begin{array}{ll} 0,
& \textrm{if
$q$ is odd},\\
(-1)^{\frac{q}{2}}c_{\frac{q}{2}}(x_1^2,\ldots,x_n^2), & \textrm{if
$q$ is even}.
\end{array} \right.
\nonumber\end{eqnarray}

This gives the desired property.
\end{proof}

Now we can prove Theorems \ref{mainresult3} and \ref{mainresult4}.

\begin{proof}
If we use $x_1,\ldots,x_n$ (resp. $x_1,\ldots,x_n,-x_1,\ldots,-x_n$)
to denote the formal Chern roots of $TM$ (resp. $T_{\mathbb{C}}X$),
 then we have (cf. \cite[p. 11]{HBJ})
$$\textrm{ch}\big(\Omega_{\underline{y}}(M)\big)=
\prod_{j=1}^g\big[\prod_{i=1}^n(1+y_je^{-x_i})\big]$$
and
$$\textrm{ch}\big(\Omega_{\underline{y}}^\mathbb{R}(X)\big)=
\prod_{j=1}^g\big[\prod_{i=1}^n(1+y_je^{-x_i})(1+y_je^{x_i})\big].$$

Thus \be\begin{split}\chi_{\underline{y}}(M)
&=\int_M\big[(\prod_{i=1}^{n}\frac{x_i}{1-e^{-x_i}})\cdot\textrm{ch}(\Omega_{\underline{y}}(M))\big]\\
&=\int_M\bigg\{(\prod_{i=1}^{n}\frac{x_i}{1-e^{-x_i}})
\cdot\prod_{j=1}^g\big[\prod_{i=1}^n(1+y_je^{-x_i})]\bigg\}\end{split}\nonumber\ee
and \be\begin{split}\textrm{ind}(D_{\underline{y}}^{\mathbb{R}}(X))
&=\int_X\big[(\prod_{i=1}^{n}\frac{x_i}
{\text{tanh$\frac{x_i}{2}$}})\cdot\textrm{ch}(\Omega_{\underline{y}}^\mathbb{R}(X))\big]\\
&=\int_X\bigg\{(\prod_{i=1}^{n}\frac{x_i}
{\text{tanh$\frac{x_i}{2}$}}) \cdot\prod_{j=1}^g\big[\prod_{i=1}^n
(1+y_je^{-x_i})(1+y_je^{x_i})\big]\bigg\}.\end{split}\nonumber\ee
\end{proof}

Note that the constant terms of
$$\frac{x_i}{1-e^{-x_i}}=1+\cdots$$and
$$\frac{x_i}
{\text{tanh$\frac{x_i}{2}$}}=\frac{x_i(1+e^{-x_i})}{1-e^{-x_i}}=2+\cdots$$
are $1$ and $2$ respectively. So by Lemma \ref{lemma2}, when
considering the Taylor expansion of
$\textrm{ind}(D_{\underline{y}}(M))$ \big(resp.
$\textrm{ind}(D_{\underline{y}}^{\mathbb{R}}(X))$\big) at
$y_1=\cdots=y_g=-1$, the coefficients before the terms
$(1+y_{1})^{n-q_{1}}\cdots(1+y_{g})^{n-q_{g}}$ \big(resp.
$(1+y_{1})^{2(n-q_{1})}\cdots(1+y_{g})^{2(n-q_{g})}$\big) are
\be\begin{split}&\int_M\big\{(1+\textrm{higher degree
terms})\cdot\prod_{j=1}^g\big[c_{q_j}(x_1,\ldots,x_n)+
\textrm{higher degree terms}\big]\big\}\\
&=\int_M\prod_{i=1}^gc_{q_i}(M)+\int_M(\textrm{higher degree
terms}),
\end{split}\nonumber\ee
 \be\begin{split}\bigg(\text{resp}.~&\int_X\big\{(2^n+\textrm{higher degree
terms})\cdot\prod_
{j=1}^g\big[(-1)^{q_j}c_{q_j}(x_1^2,\ldots,x_n^2)+ \textrm{higher
degree terms}\big]\big\}\\
&=2^n\cdot(-1)^{\sum_{j=1}^gq_j}\int_X\prod_{j=1}^gp_{q_i}(X)+\int_X(\textrm{higher
degree terms}),\bigg)
\end{split}\nonumber\ee
which give the desired proofs of Theorems \ref{mainresult3} and
\ref{mainresult4}.

\section{Generalized elliptic genus and its $-1$-phenomenon}\label{section3}
\subsection{Generalized elliptic genus of almost-complex manifolds}
In this subsection we extend the original definition of elliptic
genus of almost-complex manifolds by considering an extra complex
vector bundle and show that it is a weak Jacobi form. As before, let
$(M^{2d},J)$ be a compact almost-complex manifold and $W$ be a rank
$l$ complex vector bundle over it.

\begin{definition}
The generalized elliptic genus of $(M^{2d},J)$ with respect to $W$,
which we denote by $\textrm{Ell}(M,W,\tau,z)$, is defined to be the
index of the Todd operator
$$\Omega^{0,\textrm{even}}(M)\xrightarrow{(\bar{\partial}+
\bar{\partial}^{\ast})|_{0}}\Omega^{0,\textrm{odd}}(M)$$ twisted by
$$c^{2(d-l)}\cdot
y^{-\frac{l}{2}}\otimes_{n\geq 1}(\Lambda_{-yq^{n-1}}W^{\ast}
\otimes\Lambda_{-y^{-1}q^n}W\otimes\textrm{S}_{q^n}T^{\ast}
\otimes\textrm{S}_{q^n}T)=:\textrm{E}(W,q,y),$$ where
$$q=e^{2\pi\sqrt{-1}\tau},~y=e^{2\pi\sqrt{-1}z},$$
and for simplicity $c:=\prod_{i=1}^{\infty}(1-q^i).$

If $W=T$, this definition degenerates to the original elliptic
genus.
\end{definition}

Our first observation in this section is the following result, which
extends \cite[Theorem 2.2]{BL} and \cite[Proposition 1.2]{Gr1} in
which case $W=T$.

\begin{theorem}\label{theoremofellipticgenus}

{\rm{The generalized elliptic genus $\textrm{Ell}(M,W,\tau,z)$ is a
weak Jacobi form of weight $d-l$ and index $\frac{l}{2}$ provided
that the first Pontrjagin classes $p_1(M)=p_1(W)$ and the first
Chern class $c_1(W)=0$ in $H^{\ast}(M,\mathbb{R})$.}}
\end{theorem}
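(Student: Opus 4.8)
The plan is to follow the theta-function strategy pioneered by Liu in his proof of rigidity \cite{Liu1}, adapting it to the extra twisting bundle $W$. First I would invoke the Atiyah--Singer index theorem to write $\textrm{Ell}(M,W,\tau,z)=\int_M\textrm{td}(M)\cdot\textrm{ch}\big(\textrm{E}(W,q,y)\big)$ and express the integrand as a product over formal Chern roots. Writing $x_1,\dots,x_d$ for the Chern roots of $TM$ and $w_1,\dots,w_l$ for those of $W$, the two symmetric-power factors $\textrm{S}_{q^n}T^{\ast}\otimes\textrm{S}_{q^n}T$ combine with the Todd class into a Witten-genus-type product in the roots $x_i$ that depends only on $\tau$, while the two exterior-power factors $\Lambda_{-yq^{n-1}}W^{\ast}\otimes\Lambda_{-y^{-1}q^n}W$ assemble (after absorbing $y^{-\frac{l}{2}}$) into a product $\prod_j\theta\big(z-w_j/(2\pi\sqrt{-1}),\tau\big)$ of Jacobi theta functions carrying the $z$-dependence. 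The normalizing scalar $c^{2(d-l)}=\prod_n(1-q^n)^{2(d-l)}$ is exactly what is needed to turn these infinite products into ratios of the classical theta function $\theta$ and $\theta'(0,\tau)$, so that the whole integrand becomes a quotient of theta functions in the variables $v_i=x_i/(2\pi\sqrt{-1})$, $u_j=w_j/(2\pi\sqrt{-1})$ and $z$.

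Next I would verify the three defining properties of a weak Jacobi form directly from the transformation laws of $\theta$. For \emph{weak holomorphy at the cusp} I would read off the leading Fourier coefficient: the $q^0$-term reduces to $y^{-\frac{l}{2}}\int_M\textrm{td}(M)\,\textrm{ch}(\Lambda_{-y}W^{\ast})$, so that no negative powers of $q$ occur and the prefactor $c^{2(d-l)}$ guarantees a $q$-expansion beginning at $q^0$. For the \emph{elliptic transformation} under $z\mapsto z+\lambda\tau+\mu$ I would use the quasi-periodicity of $\theta$; each of the $l$ factors $\theta(z-u_j,\tau)$ contributes the same automorphy factor, and since $c_1(W)=\sum_j w_j=0$ the linear-in-$w_j$ terms produced by the translation cancel after summation, leaving precisely the automorphy factor $e^{-2\pi\sqrt{-1}\frac{l}{2}(\lambda^2\tau+2\lambda z)}$ of a form of index $\frac{l}{2}$.

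The heart of the argument is the \emph{modular transformation} under the generators of $SL_2(\mathbb{Z})$. Invariance under $T:\tau\mapsto\tau+1$ is immediate from the integral $q$-expansion. Under $S:(\tau,z)\mapsto(-1/\tau,z/\tau)$ I would substitute the law $\theta(v/\tau,-1/\tau)=(-\sqrt{-1}\tau)^{1/2}e^{\pi\sqrt{-1}v^2/\tau}\theta(v,\tau)$ (up to a universal constant) and its derivative analogue for $\theta'(0,\cdot)$ into every theta factor of the integrand. This produces exponential anomalies quadratic in the three types of variables: a Gaussian in $z$, Gaussians in the tangent roots $v_i$, and Gaussians in the bundle roots $u_j$, the latter arising after using the quasi-periodicity of $\theta$ to undo the shift $\theta(z-u_j\tau,\tau)$ back to $\theta(z-u_j,\tau)$. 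The $z$-Gaussian is exactly the index-$\frac{l}{2}$ automorphy factor $e^{2\pi\sqrt{-1}\frac{l}{2}z^2/\tau}$; the cross term linear in $z$ sums to $z\sum_j u_j=0$ and so disappears precisely because $c_1(W)=0$; and the purely quadratic anomalies in $v_i$ and $u_j$ assemble into an exponential of $p_1(M)-p_1(W)$, which equals $1$ exactly because $p_1(M)=p_1(W)$ in $H^{\ast}(M,\mathbb{R})$. Finally, collecting the half-integer powers $(-\sqrt{-1}\tau)^{1/2}$ from all the theta factors and invoking the degree-$2d$ truncation forced by $\int_M$ produces the single automorphy factor $\tau^{d-l}$, i.e.\ weight $d-l$.

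I expect the main obstacle to be the bookkeeping of this last step: organizing the $S$-anomaly so that the $p_1$-Gaussians cancel \emph{as cohomology classes} and the half-integer powers of $\tau$ assemble into the single weight $\tau^{d-l}$ only after integration over $M$ truncates the expansion to cohomological degree $2d$. This is the twisted analogue of the anomaly cancellation in the Bott--Taubes--Liu rigidity theorems \cite{BT}, \cite{Liu1}; the two geometric hypotheses $p_1(M)=p_1(W)$ and $c_1(W)=0$ enter exactly as the conditions needed to kill the quadratic and linear anomalies respectively. Once these cancellations are established, the specialization $W=T$ (weight $0$, index $\frac{d}{2}$) recovers \cite[Theorem 2.2]{BL} and \cite[Proposition 1.2]{Gr1}, providing a consistency check on the claimed weight and index.
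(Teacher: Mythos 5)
Your proposal follows essentially the same route as the paper: rewrite $\textrm{Ell}(M,W,\tau,z)$ via Atiyah--Singer as an integral of a theta-function expression in the Chern roots (the paper normalizes with $\eta(\tau)^{3(d-l)}$, equivalent to your $\theta'(0,\tau)$ factors by Jacobi's derivative formula), then check the elliptic and modular transformation laws on generators, with $c_1(W)=0$ killing the linear cross terms, $p_1(M)=p_1(W)$ cancelling the quadratic Gaussian anomalies, and the degree-$d$ truncation forced by $\int_M$ producing the weight factor $\tau^{d-l}$. All of these are exactly the steps of the paper's proof, so the proposal is correct and not a genuinely different argument.
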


\begin{remark}
~
\begin{enumerate}
\item
A two-variable function $\varphi(\tau,z)$ for
$(\tau,z)\in\mathbb{H}\times\mathbb{C}$ is called a \emph{weak
Jacobi form of weight $k$ and index $m$} for $k\in\mathbb{Z}$ and
$m\in\mathbb{Z}/2$ if it is a holomorphic function with respect to
the two variables $\tau$ and $z$, has no negative powers of $q$ in
its Fourier expansion in terms of $q^iy^j$ and satisfies some
transformation laws involving $k$ and $m$, where the precise
definition can be found in \cite[p. 104, p. 9]{EZ}. In \cite{EZ} the
only integral indices are considered. However, with some minor
modifications of inserting a character, this notion can be easily
extended to the case where the index is allowed to be a
half-integer. (cf. \cite[p. 102]{Gr1}).

\item
Motivated by his ingenious proof of the rigidity theorem, Liu
constructed a two-variable function for $(M,J)$ and $W$ and showed
that it is a weak Jacobi form under some assumptions and the
original Witten theorem exactly corresponds to the case where the
index is equal to zero (\cite[Theorem 3, Corollary 3.1]{Liu2}). This
construction later was generalized to the family case by Liu-Ma
(\cite[Theorem 3.1]{LiuMa}). So our theorem has a similar flavor to
those of Liu and Ma.

\item
In \cite[Theorem 1.2]{Gr2}, Gritsenko further extended the original
elliptic genus to another case where an extra complex bundle is
involved. But his construction is different from ours as it is still
of weight zero.
\end{enumerate}
\end{remark}

The Atiyah-Singer index theorem tells us that
$$\textrm{Ell}(M,W,\tau,z)=\int_M\textrm{td}(M)\cdot\textrm{ch}\big(\textrm{E}(W,q,y)\big).$$

In particular, if $J$ is integrable, $\textrm{Ell}(M,W,\tau,z)$ is
the holomorphic Euler characteristic of the (virtual) bundle
$\textrm{E}(W,q,y)$.

Let us recall one of the famous Jacobi-theta series (\cite[Chapter
$5$]{Ch}) \be\begin{split}
\theta(\tau,z):&=\sum_{n\in\mathbb{Z}}(-1)^nq^{\frac{(n+\frac{1}{2})^2}{2}}
y^{n+\frac{1}{2}}\\
&=2cq^{\frac{1}{8}}\textrm{sin}(\pi z)\prod_{n=1}^{\infty}
(1-q^ny)(1-q^ny^{-1})\\
&=2cq^{\frac{1}{8}}\textrm{sinh}(\pi\sqrt{-1}z)\prod_{n=1}^{\infty}
(1-q^ny)(1-q^ny^{-1})\\
&=2cq^{\frac{1}{8}}\textrm{sinh}(\pi\sqrt{-1}z)\prod_{n=1}^{\infty}
(1-q^ne^{2\pi\sqrt{-1}z})(1-q^ne^{-2\pi\sqrt{-1}z}).
\end{split}\nonumber\ee

The following lemma says that $\textrm{Ell}(M,W,\tau,z)$ can be
expressed in terms of $\theta(\tau,z)$.

\begin{lemma}
{\rm{If we denote by $2\pi\sqrt{-1}x_i$ ($1\leq i\leq d$) and
$2\pi\sqrt{-1}w_i$ ($1\leq i\leq l$) respectively the Chern roots of
$\textrm{T}M$ and $W$, then we have
\be\begin{split}&\textrm{Ell}(M,W,\tau,z)\\
=&\int_M\bigg[\textrm{exp}\big(\frac{c_1(M)-c_1(W)}{2}\big)\cdot
\big(\eta(\tau)\big)^{3(d-l)}\cdot\prod_{i=1}^d\frac{2\pi\sqrt{-1}x_i}
{\theta(\tau,x_i)}\cdot\prod_{j=1}^l\theta(\tau,w_j-z)\bigg],\end{split}\nonumber\ee
where
$$\eta(\tau):=q^{\frac{1}{24}}\cdot
c=q^{\frac{1}{24}}\prod_{i=1}^{\infty}(1-q^i)$$ is the famous
Dedekind eta function. In particular, $\textrm{Ell}(M,W,\tau,z)$ is
a holomorphic function with respect to the two variables $\tau$ and
$z$ and has no negative powers of $q$ in its Fourier expansion.}}

\end{lemma}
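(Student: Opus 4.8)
The plan is to start from the Atiyah--Singer index theorem, which (as already recorded just above the lemma) gives $\textrm{Ell}(M,W,\tau,z)=\int_M\textrm{td}(M)\cdot\textrm{ch}\big(\textrm{E}(W,q,y)\big)$, to compute the integrand explicitly by the splitting principle, and finally to recognize the resulting infinite products as the Jacobi theta function $\theta(\tau,z)$ and the Dedekind eta function $\eta(\tau)$. Writing the Chern roots of $TM$ as $2\pi\sqrt{-1}x_i$ and those of $W$ as $2\pi\sqrt{-1}w_j$, I would first apply the standard formulas $\textrm{ch}(\Lambda_tV)=\prod(1+t\,e^{r})$ and $\textrm{ch}(S_tV)=\prod(1-t\,e^{r})^{-1}$ over the Chern roots $r$ of $V$, to $V=W^\ast,W,T^\ast,T$. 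This turns $\textrm{ch}(\textrm{E}(W,q,y))$ into the prefactor $c^{2(d-l)}y^{-l/2}$ times a product over $n\ge1$ of the four factors $\prod_j(1-yq^{n-1}e^{-2\pi\sqrt{-1}w_j})$, $\prod_j(1-y^{-1}q^{n}e^{2\pi\sqrt{-1}w_j})$, $\prod_i(1-q^ne^{-2\pi\sqrt{-1}x_i})^{-1}$ and $\prod_i(1-q^ne^{2\pi\sqrt{-1}x_i})^{-1}$.

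Next I would assemble these into theta functions, treating the $T$-part and the $W$-part separately. For the $T$-part I combine the Todd factor $\frac{2\pi\sqrt{-1}x_i}{1-e^{-2\pi\sqrt{-1}x_i}}$ with the symmetric-power denominators $\prod_{n\ge1}(1-q^ne^{2\pi\sqrt{-1}x_i})(1-q^ne^{-2\pi\sqrt{-1}x_i})$; comparing with the product formula for $\theta(\tau,z)$ at $z=x_i$ and using the elementary identities $2\sin(\pi x_i)=\tfrac{1}{\sqrt{-1}}(e^{\pi\sqrt{-1}x_i}-e^{-\pi\sqrt{-1}x_i})$ and $1-e^{-2\pi\sqrt{-1}x_i}=e^{-\pi\sqrt{-1}x_i}(e^{\pi\sqrt{-1}x_i}-e^{-\pi\sqrt{-1}x_i})$, this factor becomes $\frac{2\pi\sqrt{-1}x_i}{\theta(\tau,x_i)}$ up to a power of $cq^{1/8}$ and a factor $e^{\pi\sqrt{-1}x_i}$, whose product over $i$ yields $e^{c_1(M)/2}$. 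For the $W$-part I split off the $n=1$, $q^0$ term $\prod_j(1-y\,e^{-2\pi\sqrt{-1}w_j})$ so that the remaining product over $n\ge1$ matches the infinite product of $\theta(\tau,w_j-z)$ (with second argument $w_j-z$); the same elementary identity converts the split-off factor, together with the $\sin$-normalization, into $\prod_j\theta(\tau,w_j-z)$ times $y^{l/2}$ and $e^{-c_1(W)/2}$, again up to powers of $cq^{1/8}$.

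I would then collect the bookkeeping constants. The surviving powers of $c$ and $q^{1/8}$ from the two parts together with the prefactor $c^{2(d-l)}$ combine into $c^{3(d-l)}q^{(d-l)/8}=\big(\eta(\tau)\big)^{3(d-l)}$; the $y^{-l/2}$ and $y^{l/2}$ cancel; and the exponential factors give $\exp\big(\tfrac12(c_1(M)-c_1(W))\big)$, producing the displayed formula. For the last two assertions I would read off the lowest $q$-power of each assembled block: $\prod_i\frac{2\pi\sqrt{-1}x_i}{\theta(\tau,x_i)}$ contributes $q^{-d/8}$ (the apparent pole at $x_i=0$ is cancelled by the numerator, and $\theta_z(\tau,0)=2\pi\big(\eta(\tau)\big)^3\ne0$ on $\mathbb{H}$ makes it holomorphic in $\tau$), $\prod_j\theta(\tau,w_j-z)$ contributes $q^{l/8}$, and $\big(\eta(\tau)\big)^{3(d-l)}$ contributes $q^{(d-l)/8}$; these fractional powers cancel, leaving $q^{0}$ as the lowest power and only non-negative integer powers of $q$ thereafter. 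Holomorphy in $(\tau,z)\in\mathbb{H}\times\mathbb{C}$ is then immediate, since $\theta$ and $\eta$ are holomorphic and, after integration over $M$, only finitely many cohomological degrees in the nilpotent classes $x_i,w_j$ survive, so the result is a finite combination of $\tau$- and $z$-derivatives of $\theta$ and $\eta$.

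The hard part will be the bookkeeping in the middle step: matching the powers of $cq^{1/8}$, the $y^{\pm l/2}$, and in particular extracting the exponential corrections $e^{\pm\pi\sqrt{-1}(\cdot)}$ that must reassemble precisely into $\exp\big(\tfrac12(c_1(M)-c_1(W))\big)$, since a careless treatment introduces a spurious overall constant. Keeping the $\sin$-versus-$(1-e^{-\,\cdot\,})$ identity applied consistently across both the $T$-part and the $W$-part is exactly what forces the $\eta$-power and the exponential factor to come out as stated, and is the point where I would be most careful.
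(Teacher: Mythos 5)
Your proposal is correct and follows essentially the same route as the paper's own proof: apply the Atiyah--Singer index theorem, expand $\textrm{ch}\big(\textrm{E}(W,q,y)\big)$ over the Chern roots, and reassemble the infinite products into $\theta(\tau,x_i)$, $\theta(\tau,w_j-z)$ and $\big(\eta(\tau)\big)^{3(d-l)}$, with the $\sinh$-versus-$\big(1-e^{-(\cdot)}\big)$ identity producing $\exp\big(\tfrac{1}{2}(c_1(M)-c_1(W))\big)$ and cancelling the $y^{\pm l/2}$ factors. Your explicit counting of the fractional $q$-powers ($q^{-d/8}$, $q^{l/8}$, $q^{(d-l)/8}$) to justify holomorphy and the absence of negative powers of $q$ is slightly more detailed than the paper's brief remark about the simple zeroes of $\theta$, but the substance is identical.
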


\begin{proof}
\be\begin{split}&\textrm{ch}\big(\textrm{E}(W,q,y)\big)\\
=&c^{2(d-l)}y^{-\frac{l}{2}} \prod_{j=1}^l(1-ye^{-2\pi\sqrt{-1}w_j})
\prod_{n=1}^{\infty}
\frac{\prod_{j=1}^l(1-yq^ne^{-2\pi\sqrt{-1}w_j})
(1-y^{-1}q^ne^{2\pi\sqrt{-1}w_j})}
{\prod_{i=1}^d(1-q^ne^{-2\pi\sqrt{-1}x_i})
(1-q^ne^{2\pi\sqrt{-1}x_i})}\\
=&c^{2(d-l)}y^{-\frac{l}{2}} \prod_{j=1}^l(1-ye^{-2\pi\sqrt{-1}w_j})
\prod_{j=1}^l\frac{\theta(\tau,w_j-z)}{2cq^{\frac{1}{8}}\textrm{sinh}(\pi\sqrt{-1}(w_j-z))}
\prod_{i=1}^d\frac{2cq^{\frac{1}{8}}\textrm{sinh}(\pi\sqrt{-1}x_i)}{\theta(\tau,x_i)}\\
=&\textrm{exp}(\frac{c_1(M)-c_1(W)}{2})\cdot
\big(\eta(\tau)\big)^{3(d-l)}\cdot\prod_{i=1}^d\frac{1-e^{-2\pi\sqrt{-1}x_i}}
{\theta(\tau,x_i)}\cdot\prod_{j=1}^l\theta(\tau,w_j-z)\end{split}\nonumber\ee

The last equality is due to the facts that
$$c_1(M)=\sum_{i=1}^d2\pi\sqrt{-1}x_i\qquad\text{and}\qquad
c_1(W)=\sum_{j=1}^l2\pi\sqrt{-1}w_j.$$

Therefore,
\be\begin{split}&\textrm{Ell}(M,W,\tau,z)\\
=&\int_M\textrm{td}(M)\cdot\textrm{ch}\big(\textrm{E}(W,q,y)\big)\\
=&\int_M\prod_{i=1}^d\frac{2\pi\sqrt{-1}x_i}{1-e^{-2\pi\sqrt{-1}x_i}}
\cdot\textrm{ch}\big(\textrm{E}(W,q,y)\big)\\
=&\int_M\bigg[\textrm{exp}\big(\frac{c_1(M)-c_1(W)}{2}\big)\cdot
\big(\eta(\tau)\big)^{3(d-l)}\cdot\prod_{i=1}^d\frac{2\pi\sqrt{-1}x_i}
{\theta(\tau,x_i)}\cdot\prod_{j=1}^l\theta(\tau,w_j-z)\bigg]\end{split}\nonumber\ee

The holomorphicity of $\textrm{Ell}(M,W,\tau,z)$ is now clear from
this expression as the Jacobi-theta function $\theta(\tau,z)$ only
has zeroes of order $1$ along $z=m_1+m_2\tau$ ($m_1,
m_2\in\mathbb{Z}$) (\cite[p. 59]{Ch}). Also it is obvious from this
expression that $\textrm{Ell}(M,W,\tau,z)$ has no negative powers of
$q$ in its Fourier expansion.
\end{proof}

\emph{Proof of Theorem \ref{theoremofellipticgenus}}.

 $SL_2(\mathbb{Z})$ is generated by the two matrices $$\left(
                           \begin{array}{cc}
                             0 & -1 \\
                             1 & 0 \\
                           \end{array}
                         \right)
                         \qquad\text{and}\qquad
\left(
                           \begin{array}{cc}
                             1 & 1 \\
                             0 & 1 \\
                           \end{array}
                         \right).$$
                        To verify that $\textrm{Ell}(M,W,\tau,z)$ satisfies the required
                        transformation laws, it suffices to
show the following four identities
\be\label{identity1}\textrm{Ell}(M,W,\tau+1,z)=\textrm{Ell}(M,W,\tau,z),\ee
\be\label{identity2}\textrm{Ell}(M,W,\tau,z+1)=(-1)^l\textrm{Ell}(M,W,\tau,z),\ee
\be\label{identity3}\textrm{Ell}(M,W,\tau,z+\tau)=(-1)^l\exp{\big(
-\pi\sqrt{-1}l(\tau+2z)\big)}\textrm{Ell}(M,W,\tau,z),\ee
\be\label{identity4}\textrm{Ell}(M,W,-\frac{1}{\tau},\frac{z}{\tau})
=\tau^{d-l}\exp{(\frac{\pi\sqrt{-1}lz^2}{\tau})}\text{Ell}(M,W,\tau,z).\ee

For Dedekind eta function $\eta(\tau)$ and Jacobi-theta function
$\theta(\tau,z)$ we have the following transformation laws
(\cite{Ch}): \be
\begin{split}&\eta^3(-\frac{1}{\tau})=(\frac{\tau}{\sqrt{-1}})^{\frac{3}{2}}\eta^3(\tau),
\qquad
\eta^3(\tau+1)=e^{\frac{\pi\sqrt{-1}}{4}}\eta^3(\tau),\\
&\theta(\tau,z+1)=-\theta(\tau,z),\qquad \theta(\tau,z+\tau)=
-q^{-\frac{1}{2}}\exp{(-2\pi\sqrt{-1}z)}\theta(\tau,z),\\
&\theta(\tau+1,z)=\exp{(\frac{\pi\sqrt{-1}}{4})}\theta(\tau,z),\qquad
\theta(-\frac{1}{\tau},z)=
 -\sqrt{-1}(\frac{\tau}{\sqrt{-1}})^{\frac{1}{2}}
 \exp{(\pi\sqrt{-1}\tau z^2)}\theta(\tau,\tau z).\end{split}\nonumber\ee

The first three identities (\ref{identity1}), (\ref{identity2}) and
(\ref{identity3}) are easy to verify by using the above-listed
transformation laws. Here we only need to check (\ref{identity4})
carefully. Indeed, \be\label{identity5}\begin{split}
&\prod_{i=1}^d\theta(-\frac{1}{\tau},x_i)\\
=&\prod_{i=1}^d-\sqrt{-1}(\frac{\tau}{\sqrt{-1}})^{\frac{1}{2}}
 \exp{(\pi\sqrt{-1}\tau x_i^2)}\theta(\tau,\tau x_i)\\
=&\exp{(\frac{\tau p_1(M)}{4\pi\sqrt{-1}})}\prod_{i=1}^d
-\sqrt{-1}(\frac{\tau}{\sqrt{-1}})^{\frac{1}{2}}\theta(\tau,\tau
x_i).
\end{split}\ee

Here we use the assumption that
$$p_1(M)=\sum_{i=1}^d(2\pi\sqrt{-1}x_i)^2.$$

Similarly, \be\label{identity6}\begin{split}
&\prod_{j=1}^l\theta(-\frac{1}{\tau},w_i-\frac{z}{\tau})\\
=&\prod_{j=1}^l-\sqrt{-1}(\frac{\tau}{\sqrt{-1}})^{\frac{1}{2}}
 \exp{\big(\pi\sqrt{-1}\tau(w_j-\frac{z}{\tau})^2\big)}\theta(\tau,\tau w_j-z)\\
=&\exp{(\frac{\tau
p_1(W)}{4\pi\sqrt{-1}}+\frac{\pi\sqrt{-1}lz^2}{\tau})}\prod_{j=1}^l
-\sqrt{-1}(\frac{\tau}{\sqrt{-1}})^{\frac{1}{2}}\theta(\tau,\tau
w_j-z).
\end{split}\ee

 In the last equality we use the assumption that $$c_1(W)=\sum_{j=1}^l
 2\pi\sqrt{-1}w_j=0.$$

Combining the transformation law of $\eta(\tau)$, (\ref{identity5}),
(\ref{identity6}) and the fact that $p_1(M)=p_1(W)$ leads to
\be\begin{split} &\textrm{Ell}(M,W,-\frac{1}{\tau},\frac{z}{\tau})\\
=&\int_M\bigg[\textrm{exp}\big(\frac{c_1(M)-c_1(W)}{2}\big)
\big(\eta(-\frac{1}{\tau})\big)^{3(d-l)}\prod_{i=1}^d\frac{2\pi\sqrt{-1}x_i}
{\theta(-\frac{1}{\tau},x_i)}\prod_{j=1}^l\theta(-\frac{1}{\tau},w_j-\frac{z}{\tau})\bigg]\\
=&\tau^{d-l}\exp{(\frac{\pi\sqrt{-1}lz^2}{\tau})}
\int_M\big[\textrm{exp}(\frac{c_1(M)-c_1(W)}{2})
\big(\eta(\tau)\big)^{3(d-l)}\prod_{i=1}^d\frac{2\pi\sqrt{-1}x_i}
{\theta(\tau,\tau x_i)}\prod_{j=1}^l\theta(\tau,\tau w_j-z)\big]\\
=&\tau^{-l}\exp{(\frac{\pi\sqrt{-1}lz^2}{\tau})}
\int_M\big[\textrm{exp}(\frac{c_1(M)-c_1(W)}{2})
\big(\eta(\tau)\big)^{3(d-l)}\prod_{i=1}^d\frac{2\pi\sqrt{-1}(\tau
x_i)}
{\theta(\tau,\tau x_i)}\prod_{j=1}^l\theta(\tau,\tau w_j-z)\big]\\
=&\tau^{d-l}\exp{(\frac{\pi\sqrt{-1}lz^2}{\tau})}\int_M\big[\textrm{exp}\big(\frac{c_1(M)-c_1(W)}{2}\big)
\big(\eta(\tau)\big)^{3(d-l)}\prod_{i=1}^d\frac{2\pi\sqrt{-1}x_i}
{\theta(\tau,x_i)}\prod_{j=1}^l\theta(\tau,w_j-z)\big]\\
=&\tau^{d-l}\exp{(\frac{\pi\sqrt{-1}lz^2}{\tau})}
\textrm{Ell}(M,W,\tau,z)
\end{split}\nonumber\ee

The last but one equality is due to the fact that in the integrand
we are only concerned with the homogeneous part of degree $d$
\big($\textrm{deg}(x_i)=\textrm{deg}(w_j)=1$\big). This completes
the proof of Theorem \ref{theoremofellipticgenus}.

\subsection{Algebraic preliminaries}
Before discussing the arithmetic properties of the generalized
elliptic genus $\textrm{Ell}(M,W,\tau,z)$, we need to review a
well-known manipulation in algebraic number theory of how to derive
modular forms from Jacobi forms.

Recall that the \emph{Eisenstein series} $G_{2k}(\tau)$ are defined
to be (\cite[p. 131]{HBJ})
$$G_{2k}(\tau):=-\frac{B_{2k}}{4k}+\sum_{n=1}^{\infty}\sigma_{2k-1}(n)\cdot q^n,$$
where
$$\sigma_k(n):=\sum_{\text{$m>0$, $m|n$}}
m^k$$ and $B_{2k}$ are the Bernoulli numbers.

These $G_{2k}(\tau)$ carry rich arithmetic information. It is
well-known that $G_{2k}(\tau)$ ($k\geq 2$) are modular forms of
weight $2k$ over the full modular group $SL_2(\mathbb{Z})$ and the
whole graded ring of modular forms over $SL_2(\mathbb{Z})$ are
generated by $G_4(\tau)$ and $G_6(\tau)$. However, $G_{2}(\tau)$ is
\emph{not} a modular form but called \emph{quasi-modular form} as it
transforms as follows (\cite[p. 138]{HBJ}). \be\label{g2}
G_2(\frac{a\tau+b}{c\tau+d})=(c\tau+d)^2
G_2(\tau)-\frac{c(c\tau+d)}{4\pi\sqrt{-1}},\qquad\forall~
 \left(
                           \begin{array}{cc}
                             a & b \\
                             c & d \\
                           \end{array}
                         \right)\in SL_2(\mathbb{Z}).                       \ee

 The following proposition, which is a well-known fact in algebraic number theory and has been used implicitly by
Gritsenko in the proof of \cite[Lemma 1.6]{Gr1}, provides us with a
method for deriving
 modular forms from Jacobi forms.

\begin{proposition}\label{jacobiformprop}
Suppose a function
$\varphi(\tau,z):~\mathbb{H}\times\mathbb{C}\rightarrow\mathbb{C}$
satisfies
\be\label{jacobiformtran}\varphi(\frac{a\tau+b}{c\tau+d},\frac{z}{c\tau+d})
=(c\tau+d)^k\text{exp}(\frac{2\pi\sqrt{-1}mcz^2}{c\tau+d})\cdot\varphi(\tau,z),\qquad\qquad\forall~\left(
                           \begin{array}{cc}
                             a & b \\
                             c & d \\
                           \end{array}
                         \right)
 \in SL_2(\mathbb{Z}).                    \ee
i.e., $\varphi(\tau,z)$ transforms like a Jacobi from of weight $k$
and index $m$.

 Then, if we define
 $$\Phi(\tau,z):=\text{exp}\big(-8\pi^2mG_2(\tau)
 z^2\big)\varphi(\tau,z),$$
we have
\be\label{g22}\Phi(\frac{a\tau+b}{c\tau+d},\frac{z}{c\tau+d})=
(c\tau+d)^k\Phi(\tau,z).\ee

This means, if we set
$$\Phi(\tau,z)=:\sum_{n\in\mathbb{Z}}a_n(\tau)\cdot z^n,$$
then
$$a_n(\frac{a\tau+b}{c\tau+d})=(c\tau+d)^{k+n}a_n(\tau).$$

In particular, if $\varphi(\tau,z)$ is a weak Jacobi form of weight
$k$ and index $m$, then these $a_n(\tau)$ are modular forms of
weight $k+n$ over $SL_2(\mathbb{Z})$.
\end{proposition}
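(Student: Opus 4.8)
The plan is to exploit the single non-modular term appearing in the transformation law (\ref{g2}) of the quasi-modular Eisenstein series $G_2(\tau)$, and to show that the correction factor $\exp(-8\pi^2 m G_2(\tau)z^2)$ is engineered precisely so that this anomalous term cancels the index multiplier $\exp\big(2\pi\sqrt{-1}mcz^2/(c\tau+d)\big)$ carried by the hypothesis (\ref{jacobiformtran}). To begin, I would fix an arbitrary $\gamma=\left(\begin{smallmatrix} a & b \\ c & d\end{smallmatrix}\right)\in SL_2(\mathbb{Z})$ and abbreviate $j:=c\tau+d$. Evaluating the definition of $\Phi$ at the transformed point gives
$$\Phi\Big(\tfrac{a\tau+b}{c\tau+d},\tfrac{z}{j}\Big)=\exp\Big(-8\pi^2 m\,G_2(\gamma\tau)\tfrac{z^2}{j^2}\Big)\cdot\varphi\Big(\tfrac{a\tau+b}{c\tau+d},\tfrac{z}{j}\Big).$$
Into this I substitute the two transformation laws: by the hypothesis (\ref{jacobiformtran}) the $\varphi$-factor equals $j^k\exp(2\pi\sqrt{-1}mcz^2/j)\,\varphi(\tau,z)$, while by (\ref{g2}) we have $G_2(\gamma\tau)=j^2 G_2(\tau)-cj/(4\pi\sqrt{-1})$.

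The crux is then to track the exponential contributions. Expanding $-8\pi^2 m\,G_2(\gamma\tau)\,z^2/j^2$ by means of (\ref{g2}) produces the genuinely modular piece $-8\pi^2 m\,G_2(\tau)\,z^2$ together with the correction $+2\pi m c z^2/(\sqrt{-1}\,j)$; using $1/\sqrt{-1}=-\sqrt{-1}$ this correction becomes $-2\pi\sqrt{-1}\,mcz^2/j$, which is exactly the negative of the index multiplier exponent coming from (\ref{jacobiformtran}). These two terms annihilate one another, and what survives is
$$\Phi\Big(\tfrac{a\tau+b}{c\tau+d},\tfrac{z}{j}\Big)=j^k\exp\big(-8\pi^2 m\,G_2(\tau)\,z^2\big)\,\varphi(\tau,z)=(c\tau+d)^k\,\Phi(\tau,z),$$
which is precisely (\ref{g22}). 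This single cancellation is the only substantive point; everything else is formal.

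Next I would expand $\Phi(\tau,z)=\sum_{n}a_n(\tau)z^n$ and insert it into (\ref{g22}). The left-hand side becomes $\sum_n a_n(\gamma\tau)\,j^{-n}z^n$ and the right-hand side $(c\tau+d)^k\sum_n a_n(\tau)z^n$; comparing coefficients of $z^n$ yields $a_n(\gamma\tau)=(c\tau+d)^{k+n}a_n(\tau)$, the asserted weight-$(k+n)$ law. For the final assertion I would invoke the weak Jacobi form hypothesis: since $G_2(\tau)$ is holomorphic on $\mathbb{H}$ and its $q$-expansion has no negative powers of $q$, the correction factor $\exp(-8\pi^2 m G_2(\tau)z^2)$ is holomorphic in $(\tau,z)$ with a $q$-expansion free of negative powers; multiplying by the weak Jacobi form $\varphi(\tau,z)$ preserves both properties, so each $a_n(\tau)$ is holomorphic on $\mathbb{H}$ and holomorphic at the cusp, hence a genuine modular form of weight $k+n$ over $SL_2(\mathbb{Z})$.

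The main obstacle is nothing deep but rather the careful bookkeeping of signs in the exponential cancellation of the second step --- in particular the correct handling of the factor $1/\sqrt{-1}=-\sqrt{-1}$, so that the $G_2$-anomaly and the index multiplier cancel rather than reinforce. Once that single identity is checked, the transformation law (\ref{g22}) follows at once, and the statements about the $a_n(\tau)$ are obtained by the routine comparison of power-series coefficients described above.
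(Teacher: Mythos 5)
Your proposal is correct and follows essentially the same route as the paper: the paper's proof simply asserts that (\ref{g22}) ``can be verified directly'' from (\ref{jacobiformtran}) and (\ref{g2}), and your explicit cancellation of the $G_2$-anomaly $-2\pi\sqrt{-1}\,mcz^2/(c\tau+d)$ against the index multiplier is precisely that verification, carried out with the correct signs. Your subsequent coefficient comparison in $z$ and the holomorphy/no-negative-$q$-powers argument for the $a_n(\tau)$ match the paper's proof as well.
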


\begin{proof}
(\ref{g22}) can be verified directly by using the assumption
condition (\ref{jacobiformtran}) and the transformation law
(\ref{g2}). If moreover $\varphi(\tau,z)$ is a weak Jacobi form,
then $\varphi(\tau,z)$ and thus $\Phi(\tau,z)$ are holomorphic and
have no negative powers of $q$ when considering their Fourier
expansions in terms of $q$ and $y$. This implies that these
$a_n(\tau)$ are also holomorphic and have no negative powers of $q$
when considering the Fourier expansions of $q$, which gives the
desired proof.
\end{proof}

Now with the assumptions in the Theorem \ref{theoremofellipticgenus}
understood, we know that $\textrm{Ell}(M,W,\tau,z)$ is a weak Jacobi
form of weight $d-l$ and index $\frac{l}{2}$. Then Proposition
\ref{jacobiformprop} tells us that

\begin{proposition}\label{coefficientlemma}

 The series $a_n(M,W,\tau)$ determined by
$$\text{exp}\big[l\cdot G_2(\tau)
\cdot(2\pi\sqrt{-1}z)^2\big]\cdot\textrm{Ell}(M,W,\tau,z)=:
\sum_{n\geq0}a_n(M,W,\tau)\cdot(2\pi\sqrt{-1}z)^n$$ are modular
forms of weight $d-l+n$ over $SL_2(\mathbb{Z})$. Furthermore, the
first three series of $a_n(M,W,\tau)$ are of the following form:
\be\begin{split}
&a_0(M,W,\tau)\\
=&\chi(M,\Lambda_{-1}W^{\ast})+
q\cdot\chi\bigg(M,\Lambda_{-1}W^{\ast}\otimes\big(-2(d-l)-W-W^{\ast}+T+T^{\ast}\big)\bigg)+q^2\cdot(\cdots),\\
&a_1(M,W,\tau)=\sum_{p=0}^l(-1)^p(p-\frac{l}{2})\chi(M,\Lambda^pW^{\ast})+q\cdot(\cdots),\\
&a_2(M,W,\tau)=\big[-\frac{l}{24}\chi(M,\Lambda_{-1}W^{\ast})+\frac{1}{2}\sum_{p=0}^l(-1)^p(p-\frac{l}{2})^2\chi(M,\Lambda^pW^{\ast})
\big]+q\cdot(\cdots).
\end{split}\nonumber\ee

\end{proposition}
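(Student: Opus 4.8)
The plan is to treat the two assertions separately: the modularity comes for free from the machinery already in place, while the three leading series are read off by a direct Taylor expansion in $z$.

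The first assertion is immediate. By Theorem \ref{theoremofellipticgenus}, $\textrm{Ell}(M,W,\tau,z)$ is a weak Jacobi form of weight $d-l$ and index $\frac{l}{2}$, so in particular it transforms as in (\ref{jacobiformtran}) with $k=d-l$ and $m=\frac{l}{2}$. Since $(2\pi\sqrt{-1}z)^2=-4\pi^2z^2$, the prefactor $\textrm{exp}[l\cdot G_2(\tau)(2\pi\sqrt{-1}z)^2]$ is exactly $\textrm{exp}(-8\pi^2mG_2(\tau)z^2)$ when $m=\frac{l}{2}$, so the left-hand side of the defining identity is precisely the function $\Phi(\tau,z)$ of Proposition \ref{jacobiformprop}. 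The two expansions $\sum_n a_n(\tau)z^n$ and $\sum_n a_n(M,W,\tau)(2\pi\sqrt{-1}z)^n$ differ only by the nonzero constants $(2\pi\sqrt{-1})^n$, which do not affect modular weights; thus Proposition \ref{jacobiformprop} (together with the weak Jacobi property, which guarantees holomorphicity and the absence of negative powers of $q$) yields that each $a_n(M,W,\tau)$ is a modular form of weight $(d-l)+n$ over $SL_2(\mathbb{Z})$.

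For the explicit forms I would set $u:=2\pi\sqrt{-1}z$, so that $y=e^u$, and expand
$$\textrm{exp}[l\cdot G_2(\tau)u^2]\cdot\textrm{Ell}(M,W,\tau,z)=\textrm{exp}[l\cdot G_2(\tau)u^2]\cdot\chi\big(M,\textrm{E}(W,q,y)\big)$$
in powers of $u$. First I would extract the relevant coefficients of $q$ in $\textrm{E}(W,q,y)$ at the level of $K$-theory, then Taylor-expand the resulting $y$-dependence in $u$, and finally fold in the prefactor $\textrm{exp}[l\cdot G_2(\tau)u^2]=1-\frac{l}{24}u^2+\cdots$, whose $q^0$-part uses only the constant term $-\frac{B_2}{4}=-\frac{1}{24}$ of $G_2(\tau)$. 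Inspecting the infinite tensor product, only the $n=1$ factor $\Lambda_{-y}W^{\ast}$ survives in degree $q^0$ (every remaining factor starts with $1$ carried by a positive power of $q$, and $c^{2(d-l)}$ starts with $1$), so the $q^0$-term of $\textrm{E}(W,q,y)$ is $y^{-\frac{l}{2}}\Lambda_{-y}W^{\ast}=\sum_{p=0}^l(-1)^py^{p-\frac{l}{2}}\Lambda^pW^{\ast}$. Writing $y^{p-\frac{l}{2}}=\sum_{k\geq0}\frac{1}{k!}(p-\frac{l}{2})^ku^k$ and collecting the $u^0$, $u^1$, $u^2$ coefficients after multiplication by $1-\frac{l}{24}u^2$ reproduces exactly the stated $q^0$-parts of $a_0$, $a_1$, $a_2$; in particular the $u^2$-coefficient splits as $-\frac{l}{24}\chi(M,\Lambda_{-1}W^{\ast})$ from the prefactor plus $\frac{1}{2}\sum_p(-1)^p(p-\frac{l}{2})^2\chi(M,\Lambda^pW^{\ast})$ from the $k=2$ term.

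It remains to justify the displayed $q$-term of $a_0$, and here I would use $a_0(M,W,\tau)=\textrm{Ell}(M,W,\tau,z)|_{z=0}=\chi\big(M,\textrm{E}(W,q,1)\big)$ and compute $\textrm{E}(W,q,1)$ modulo $q^2$ in $K(M)$. Using $\Lambda_{-q}W=1-qW$, $S_qT^{\ast}=1+qT^{\ast}$, $S_qT=1+qT$ modulo $q^2$, the $n=1$ factor contributes $\Lambda_{-1}W^{\ast}\otimes\big(1+q(-W+T+T^{\ast})\big)$; the $n=2$ factor $\Lambda_{-q}W^{\ast}$ contributes $1-qW^{\ast}$; all factors with $n\geq3$ contribute $1$; and the prefactor $c^{2(d-l)}$ contributes $1-2(d-l)q$. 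Assembling these gives $\Lambda_{-1}W^{\ast}\otimes\big(-2(d-l)-W-W^{\ast}+T+T^{\ast}\big)$ as the $q^1$-coefficient, as claimed. The only delicate point in the whole argument is precisely this last bookkeeping — correctly identifying every factor of the infinite tensor product (and the prefactor $c^{2(d-l)}$) that feeds a given power of $q$; once this is organized, everything else is the routine Taylor expansion in $u$ described above.
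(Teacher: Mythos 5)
Your proof is correct and follows essentially the same route as the paper: modularity via Proposition \ref{jacobiformprop} with $m=\frac{l}{2}$, followed by a double expansion in $q$ and $z$ whose $q^0$-part is exactly the paper's $A_0(y)B_0(y)$ computation. Your evaluation-at-$z=0$ shortcut for the $q$-coefficient of $a_0$ is a mild streamlining of the paper's observation that $A_1(y)$ begins at order $z^2$ (so only $A_0(y)B_1(y)$ contributes there), and the $K$-theoretic bookkeeping modulo $q^2$ you perform is precisely the paper's computation of $B_1(y)$.
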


\begin{proof}
 The first statement is a direct application of
 Proposition \ref{jacobiformprop} as $\textrm{Ell}(M,W,\tau,z)$ is a weak Jacobi form of weight $d-l$ and index $\frac{l}{2}$.
 For the second one, if we set
$$\textrm{exp}\big[lG_2(\tau)(2\pi\sqrt{-1}z)^2\big]
=:A_0(y)+A_1(y)\cdot q+(\cdots)\cdot q^2,$$ and
$$\textrm{Ell}(M,W,\tau,z)
=:B_0(y)+B_1(y)\cdot q+(\cdots)\cdot q^2,$$ we can easily deduce
from their explicit expressions that \be\begin{split}
A_0(y)&=\textrm{exp}\big[-\frac{l}{24}(2\pi\sqrt{-1}z)^2\big]=1-\frac{l}{24}(2\pi\sqrt{-1}z)^2+\cdots,\\
A_1(y)&=l(2\pi\sqrt{-1}z)^2-\frac{l^2}{24}(2\pi\sqrt{-1}z)^4+\cdots,\\
B_0(y)&=\sum_{p=0}^l(-1)^p\chi(M,\Lambda^pW^{\ast})y^{p-\frac{l}{2}}\\
      &=\sum_{p=0}^l(-1)^p\chi(M,\Lambda^pW^{\ast})[1+(p-\frac{l}{2})(2\pi\sqrt{-1}z)+
         \frac{1}{2}(p-\frac{l}{2})^2(2\pi\sqrt{-1}z)^2+\cdots],\\
B_1(y)&=\chi\bigg(M,\Lambda_{-1}W^{\ast}\otimes
\big(-2(d-l)-W-W^{\ast}+T+T^{\ast}\big)\bigg)+
2\pi\sqrt{-1}z(\cdots).
\end{split}
\nonumber\ee

Note that
\be\begin{split}\sum_{n\geq0}a_n(M,W,\tau)(2\pi\sqrt{-1}z)^n
=A_0(y)B_0(y)+[A_0(y)B_1(y)+A_1(y)B_0(y)]q+\cdots\end{split}\nonumber\ee
then it is easy to deduce the expressions in our
 Proposition \ref{coefficientlemma} in terms of those of $A_0(y)$, $A_1(y)$, $B_0(y)$ and
$B_1(y)$.
\end{proof}

\subsection{$-1$-phenomenon of the generalized elliptic genus}
In this subsection, via Proposition \ref{coefficientlemma} presented
in the last subsection, we will investigate the arithmetic
information of the generalized elliptic genus
$\textrm{Ell}(M,W,\tau,z)$, which can be viewed as an appropriate
$-1$-phenomenon of $\textrm{Ell}(M,W,\tau,z)$.

We will present one proposition and two examples related to
$a_2(M,W,\tau)$, $a_0(M,W,\tau)$ and $a_1(M,W,\tau)$ respectively to
illustrate an appropriate $-1$-phenomenon of the generalized
elliptic genus $\textrm{Ell}(M,W,\tau,z)$.

Our next proposition related to $a_2(M,W,\tau)$ gives the ``reason''
why these $a_n(M,W,\tau)$ should be the $-1$-phenomenon of
$\textrm{Ell}(M,W,\tau,z)$.

\begin{proposition}
$a_2(M,W,\tau)$ is a modular form of weight $d-l+2$ over
$SL_2(\mathbb{Z})$ provided that $p_1(M)=p_1(W)$ and $c_1(W)=0$ in
$H^{\ast}(M,\mathbb{R})$.
 Consequently, if either (i) $d-l$ is odd, or (ii) $d\leq l$ but $d-l\neq-2$, we have
 \be\label{a2}\sum_{p=0}^l(-1)^p(p-\frac{l}{2})^2\chi(M,\Lambda^pW^{\ast})
=\frac{l}{12}\chi(M,\Lambda_{-1}W^{\ast}).\ee

Moreover, if $W=T$ and $c_1(M)=0$ in $H^{\ast}(M,\mathbb{R})$,
(\ref{a2}) is nothing but the original $-1$-phenomenon of the
Hirzebruch $\chi_y$-genus.
\end{proposition}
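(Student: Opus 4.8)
The plan is to dispatch the three assertions in order: the first is immediate, the second is the crux, and the third is a consistency check against (\ref{expressions}).

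First I would observe that the opening claim is just the case $n=2$ of Proposition \ref{coefficientlemma}. Indeed, under the stated hypotheses $p_1(M)=p_1(W)$ and $c_1(W)=0$, Theorem \ref{theoremofellipticgenus} makes $\textrm{Ell}(M,W,\tau,z)$ a weak Jacobi form of weight $d-l$ and index $\frac{l}{2}$, so Proposition \ref{coefficientlemma} already records that each $a_n(M,W,\tau)$ is a modular form of weight $d-l+n$ over $SL_2(\mathbb{Z})$; specializing to $n=2$ gives the first statement.

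For the second statement the key input is the structure of the graded ring of modular forms over $SL_2(\mathbb{Z})$: the space $M_k$ of weight-$k$ forms vanishes whenever $k$ is odd, $k<0$, or $k=2$, while among all $k\le 2$ the only nonzero space is $M_0=\mathbb{C}$. I would then check that each of the two hypotheses forces $M_{d-l+2}=0$. Under (i), $d-l$ odd makes the weight $d-l+2$ odd; under (ii), $d\le l$ gives $d-l+2\le 2$, and $d-l\ne -2$ excludes exactly the value $d-l+2=0$, leaving $d-l+2\in\{2,1,-1,-2,\ldots\}$, all of which have vanishing $M_{d-l+2}$. In either situation the modular form $a_2(M,W,\tau)$ is identically zero, so in particular its $q^0$-coefficient, read off from the explicit formula in Proposition \ref{coefficientlemma}, must vanish:
\[
-\frac{l}{24}\chi(M,\Lambda_{-1}W^{\ast})+\frac{1}{2}\sum_{p=0}^l(-1)^p(p-\tfrac{l}{2})^2\chi(M,\Lambda^pW^{\ast})=0,
\]
which rearranges at once into (\ref{a2}).

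For the third statement I would put $W=T$, so that $l=d$, $\Lambda^pW^{\ast}=\Lambda^pT^{\ast}$, $\chi(M,\Lambda^pW^{\ast})=\chi^p(M)$, and $\chi(M,\Lambda_{-1}T^{\ast})=\sum_p(-1)^p\chi^p(M)=\chi_{-1}(M)=c_d$ by (\ref{HRR}); thus (\ref{a2}) reads $\sum_{p=0}^d(-1)^p(p-\frac{d}{2})^2\chi^p(M)=\frac{d}{12}c_d$. To match this with the original $-1$-phenomenon I would relate the left-hand side to the expansion $\chi_y(M)=\sum_i a_i(M)(y+1)^i$ of (\ref{-1}). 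Writing $u=2\pi\sqrt{-1}z$ and $y=e^u$, the $q^0$-term $y^{-d/2}\chi_{-y}(M)=\sum_p(-1)^p\chi^p e^{u(p-d/2)}$ has $\frac{1}{2}\sum_p(-1)^p(p-\frac{d}{2})^2\chi^p$ as its coefficient of $u^2$; substituting instead $\chi_{-y}=\sum_i a_i(1-y)^i$ together with $y^{-d/2}=e^{-du/2}$ and $1-y=-u(1+\frac{u}{2}+\cdots)$ and expanding to order $u^2$ identifies the same coefficient with $a_2+\frac{d-1}{2}a_1+\frac{d^2}{8}a_0$. Feeding in $a_0=c_d$, $a_1=-\frac{d}{2}c_d$, and (with $c_1=0$) $a_2=\frac{d(3d-5)}{24}c_d$ from (\ref{expressions}) collapses this to $\frac{d}{24}c_d$, matching the $u^2$-coefficient computed the first way and so identifying (\ref{a2}) with the $-1$-phenomenon of the Hirzebruch $\chi_y$-genus.

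The main obstacle is conceptual and sits in the second step: one must recognize that the weight $d-l+2$ of $a_2(M,W,\tau)$ can be forced into a range carrying no nonzero modular form, so that an a priori nontrivial combination of twisted indices is compelled to vanish. The remaining work is bookkeeping, whose only delicate point is correctly tracking the $y^{-d/2}$ prefactor when translating between the expansion at $z=0$ and the expansion of $\chi_y$ about $y=-1$.
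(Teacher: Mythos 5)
Your proposal is correct and follows the paper's own proof essentially step for step: the first claim is read off as the $n=2$ case of Proposition \ref{coefficientlemma}, the identity (\ref{a2}) follows from the vanishing of all modular forms over $SL_2(\mathbb{Z})$ whose weight is odd, or is at most $2$ but nonzero, and the case $W=T$ reduces to evaluating the combination $2a_2(M)+(d-1)a_1(M)+\frac{d^2}{4}a_0(M)$ via (\ref{expressions}) with $c_1=0$. The only cosmetic difference is in how that combination is produced: you Taylor-expand $y^{-d/2}\chi_{-y}(M)$ in $u=2\pi\sqrt{-1}z$, whereas the paper uses the binomial identity $(p-\frac{d}{2})^2=p(p-1)+(1-d)p+\frac{d^2}{4}$ together with $a_i(M)=\sum_p(-1)^{p-i}\binom{p}{i}\chi^p(M)$; both routes give the identical final computation.
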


\begin{proof}
If either (i) $d-l$ is odd or (ii) $d\leq l$ but $d-l\neq-2$,
$a_2(M,W,\tau)$ is a modular form over $SL_2(\mathbb{Z})$ whose
weight is either (i) odd or (ii) no more than $2$ but not zero. This
means $a_2(M,W,\tau)\equiv0$ and then its expression in Proposition
\ref{coefficientlemma} gives (\ref{a2}).

If $W=T$, then
$$\text{the right-hand side of (\ref{a2})}=\frac{d}{12}\chi(M,\Lambda_{-1}T^{\ast})
=\frac{d}{12}\chi_y(M)\big|_{y=-1}=\frac{d}{12}c_d(M).$$

However,
\be\begin{split} &\text{the left-hand side of (\ref{a2})}\\
=&\sum_{p=0}^d(-1)^p(p-\frac{d}{2})^2\chi^p(M)\\
=&\sum_{p=0}^d(-1)^p[2\cdot\frac{p(p-1)}{2}+(1-d)p+\frac{d^2}{4}]\chi^p(M)\\
=&2a_2(M)-(1-d)a_1(M)+\frac{d^2}{4}a_0(M)\\
=&\frac{d(3d-5)}{12}c_d(M)+\frac{(1-d)d}{2}c_d(M)+\frac{d^2}{4}c_d(M)
\qquad\text{\big(via (\ref{expressions}) and $c_1(M)=0$\big)}\\
=&\frac{d}{12}c_d(M)\\
=&\text{the right-hand side of (\ref{a2})}.
\end{split}\nonumber\ee
\end{proof}

The last two examples related to $a_0(M,W,\tau)$ and $a_1(M,W,\tau)$
give much arithmetic information of $M$ and $W$.

\begin{example}
By Proposition \ref{coefficientlemma} we know that $a_0(M,W,\tau)$
is a modular form of weight $d-l$ over $SL_2(\mathbb{Z})$ provided
that $p_1(M)=p_1(W)$ and $c_1(M)=0$ in $H^2(M,\mathbb{R})$.
Consequently,
\begin{enumerate}
\item
if either $d-l$ is odd or $d-l\leq 2$ but is nonzero, we have
$$\chi(M,\Lambda_{-1}W^{\ast})=
\chi\bigg(M,\Lambda_{-1}W^{\ast}\otimes\big(-2(d-l)-W-W^{\ast}+T+T^{\ast}\big)\bigg)=0;$$

\item
if $d-l=4$, $a_0(M,W,\tau)$ is proportional to the Eisenstein series
$$G_4(\tau)=-\frac{B_{4}}{8}+q+\cdots=\frac{1}{240}+q+\cdots$$

and so
$$\chi\bigg(M,\Lambda_{-1}W^{\ast}\otimes\big(-2(d-l)-W-W^{\ast}+T+T^{\ast}\big)\bigg)=
240\chi(M,\Lambda_{-1}W^{\ast});$$

\item
if $d-l=6$, $a_0(M,W,\tau)$ is proportional to the Eisenstein series
$$G_6(\tau)=-\frac{B_{6}}{12}+q+\cdots=-\frac{1}{504}+q+\cdots$$

and so
$$\chi\bigg(M,\Lambda_{-1}W^{\ast}\otimes\big(-2(d-l)-W-W^{\ast}+T+T^{\ast}\big)\bigg)=
-504\chi(M,\Lambda_{-1}W^{\ast});$$

\item
if $d-l=8$, $a_0(M,W,\tau)$ is proportional to
$$[G_4(\tau)]^2=[\frac{1}{240}+q+\cdots]^2=\frac{1}{240^2}
+\frac{1}{120}q+\cdots$$

and so
$$\chi\bigg(M,\Lambda_{-1}W^{\ast}\otimes\big(-2(d-l)-W-W^{\ast}+T+T^{\ast}\big)\bigg)=
480\chi(M,\Lambda_{-1}W^{\ast}).$$
\end{enumerate}
\end{example}

\begin{example}
By Proposition \ref{coefficientlemma} we know that $a_1(M,W,\tau)$
is a modular form of weight $d-l+1$ over $SL_2(\mathbb{Z})$ provided
that $p_1(M)=p_1(W)$ and $c_1(M)=0$ in $ H^2(M,\mathbb{R})$.
Consequently, if either $d-l$ is even or $d-l\leq 1$ but $d-l\neq
-1$, we have
$$\sum_{p=0}^l(-1)^p(p-\frac{l}{2})\chi(M,\Lambda^pW^{\ast})=0.$$
\end{example}

\bibliographystyle{amsplain}

\end{document}